\newtheorem{theorem}{Theorem}[section]
\newtheorem{lemma}[theorem]{Lemma}
\theoremstyle{definition}
\newtheorem{definition}[theorem]{Definition}
\newtheorem{remark} [theorem] {Remark}
\begin{document}

\title{Dedekind complete and order continuous Banach $C(K)$-modules.}

\author{Arkady Kitover}

\address{Department of Mathematics, Community College of Philadelphia, 1700 Spring Garden St., Philadelphia, PA, USA}

\email{akitover@ccp.edu}

\author{Mehmet Orhon}

\address{Department of Mathematics and Statistics, University of New Hampshire, Durham, NH, 03824}

\email{mo@unh.edu}

\dedicatory{ This paper is dedicated to our dear friend, Professor Ben de Pagter, on the occasion of his 65th birthday.}

\begin{abstract} We extend the notions of Dedekind complete and $\sigma$-Dedekind complete Banach lattices to Banach $C(K)$-modules. As our main result we prove for these modules an analogue of Lozanovsky's well known characterization of Banach lattices with order continuous norm.

\end{abstract}

\subjclass[2010]{Primary 46H25; Secondary 46B42}

\keywords{Banach lattice, Dedekind complete, order continuous, Banach $C(K)$-module.}

\date{\today}

\maketitle

\markboth{Arkady Kitover and Mehmet Orhon}{Dedekind complete and order continuous Banach $C(K)$-modules.}

\section{Introduction}
This paper continues the investigation of the properties of finitely generated Banach $C(K)$-modules (see the definition preceding Theorem~\ref{t4}) undertaken by the authors in the papers~\cite{KO1, KO2, KO3}. There is good reason to consider such modules as the nearest relatives of Banach Lattices. Indeed, in the above mentioned papers the authors proved that the well known criteria of reflexivity, weak sequential completeness, and dual Radon - Nikodym property established for Banach Lattices remain valid for finitely generated Banach $C(K)$-modules, while, as it is also well known, for arbitrary Banach spaces this is not the case.

It provides some hope that it is possible to develop a meaningful theory of finitely generated Banach $C(K)$-modules parallel to the one of Banach lattices, and the current paper can be considered as another small step in this direction.

It is a well known result of Lozanovsky~\cite{Lo} that a Dedekind complete Banach lattice has order continuous norm (in short: is order continuous) if and only if it does not contain a copy of $l^\infty$. It is sufficient actually to require that the Banach lattice is $\sigma$-Dedekind complete (see e.g.~\cite{Wn}), but some condition of this type is, of course, necessary, because e.g. $C[0,1]$, being separable, does not contain a copy of $l^\infty$, but the standard norm on it is not order continuous.

Now, the following questions arise.
\begin{enumerate}
  \item  What is a natural analogue of Dedekind completeness for Banach $C(K)$-modules?
  \item The same question for $\sigma$-Dedekind completeness.
  \item What should be considered as an analog of order continuity for Banach $C(K)$-modules?
  \item Does an analog of Lozanovsky's result remain valid for \textbf{finitely generated} Banach $C(K)$-modules?
\end{enumerate}

As the reader will see, the answer to the first question is provided by the well known notion of Kaplansky module (see Definition~\ref{d5}).

The second question is more involved, and our answer to it makes use of some deep ideas of the late A.I. Veksler~\cite{Ve}

An answer to the third question, as well as a positive answer to question four (see Theorems~\ref{t3} and~\ref{t4}) represent the main results of the current paper.

After this short introduction we will proceed with some basic definitions, concepts, and notations.

\section{Preliminaries}

Let $K$ be a compact Hausdorff space and $C(K)$ be the Banach algebra of all complex (or real)-valued continuous functions on $K$. Let $X$ be a Banach space over the field $\mathds{C}$ of complex numbers or over the field $\mathds{R}$ of real numbers. Let $m$ be a unital bounded algebra homomorphism of $C(K)$ into the algebra $L(X)$ of all bounded linear operators on $X$. The triple $(C(K),m,X)$ is called a Banach $C(K)$-module.

Given a Banach $C(K)$-module we can define an equivalent norm on $X$,
$$ \|x\|^\prime = sup\{\|ax\|,a \in C(K), \|a\| \leq 1\}.$$
With respect to this norm the homomorphism $m$ is a contraction. Since $m$ is an algebra homomorphism its kernel, $\ker{m}$ is a closed ideal in the algebra $C(K)$. By replacing $C(K)$ with $C(K^\prime) \cong C(K)/\ker{m}$ where
$K^\prime = \{k \in K: a(k)=0 \; \forall a \in \ker{m}\}$ we can always assume that $m$ is one-to-one. The following lemma was proved in~\cite[Lemma 2]{HO1}.

\begin{lemma} \label{l1} Let $m: C(K) \rightarrow L(X)$ be a contractive homomorphism. Then

\noindent $ 1. \; a,b \in C(K), |a| \leq |b| \Rightarrow \|m(a)x\| \leq \|m(b)x\|$ for any $x \in X.$

 \noindent 2. If $m$ is one-to-one then it is an isometry.

\end{lemma}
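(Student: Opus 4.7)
The plan is to handle part~1 by a functional-calculus regularization that writes $a$ and $|b|$ as products with factors of sup-norm at most~$1$, and then to bootstrap to part~2 via a bump-function argument that uses injectivity of $m$ to produce vectors on which $m$ attains large norm.

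For part~1, fix $a,b\in C(K)$ with $|a|\leq|b|$ and $x\in X$. The difficulty is that $a/b$ need not be continuous where $b$ vanishes, so I would introduce the strictly positive regularization $b_\epsilon=\sqrt{|b|^2+\epsilon}\in C(K)$ for $\epsilon>0$. Then $a/b_\epsilon\in C(K)$ and $|a/b_\epsilon|\leq |b|/b_\epsilon\leq 1$. Writing $a=(a/b_\epsilon)\cdot b_\epsilon$ and using that $m$ is a contractive homomorphism,
\[
\|m(a)x\|=\|m(a/b_\epsilon)\,m(b_\epsilon)x\|\leq \|m(a/b_\epsilon)\|\cdot\|m(b_\epsilon)x\|\leq \|m(b_\epsilon)x\|.
\]
Since $b_\epsilon\to|b|$ uniformly on the compact set $K$, letting $\epsilon\to 0$ gives $\|m(a)x\|\leq\|m(|b|)x\|$. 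To replace $|b|$ by $b$ on the right-hand side, I would observe that $(\bar b/b_\epsilon)\cdot b=|b|^2/b_\epsilon$ converges uniformly to $|b|$ (an elementary computation bounds the difference by $\sqrt\epsilon$), while $|\bar b/b_\epsilon|\leq 1$; applying the same argument yields $\|m(|b|)x\|\leq\|m(b)x\|$, and combining the two inequalities finishes part~1.

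For part~2, part~1 immediately gives $\|m(a)\|=\|m(|a|)\|$, so it suffices to show $\|m(f)\|=\|f\|_\infty$ for $f\in C(K)$ with $f\geq 0$. Set $\lambda=\|f\|_\infty$ and, for each $n\in\mathbb{N}$, use Urysohn's lemma to pick $g_n\in C(K)$ with $0\leq g_n\leq 1$, $g_n\not\equiv 0$, and $g_n$ supported inside the open set $\{k\in K:f(k)>\lambda-1/n\}$. On the support of $g_n$ we have $fg_n\geq(\lambda-1/n)g_n\geq 0$, so part~1 applied to the pair $((\lambda-1/n)g_n,\,fg_n)$ gives
\[
(\lambda-1/n)\|m(g_n)x\|\leq\|m(f)\,m(g_n)x\|\leq\|m(f)\|\cdot\|m(g_n)x\|
\]
for every $x\in X$. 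Injectivity of $m$ forces $m(g_n)\neq 0$, so some $x$ makes $m(g_n)x\neq 0$, whence $\|m(f)\|\geq\lambda-1/n$. Sending $n\to\infty$ and combining with the contractivity bound $\|m(f)\|\leq\|f\|_\infty$ completes the proof.

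The main technical nuisance is the second half of part~1, where one must recover $|b|$ from $b$ through the homomorphism when $b$ is complex-valued and has zeros; the key observation that unlocks it is that the continuous functions $|b|^2/b_\epsilon$ both converge uniformly to $|b|$ and factor as products of $b$ with a function of sup-norm at most $1$. Everything else is a straightforward combination of uniform approximation, contractivity, and, for part~2, the fact that an injective $m$ sends nonzero bump functions to nonzero operators.
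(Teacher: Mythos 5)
Your proof is correct. Note that the paper itself does not prove this lemma but cites \cite[Lemma 2]{HO1}, and your argument is essentially the standard one used there: factor through functions of sup-norm at most one after regularizing the denominator, then use a Urysohn bump together with injectivity for the isometry. Two small remarks: the two-step passage through $|b|$ can be compressed by writing $a$ as the uniform limit of $c_\epsilon b$ with $c_\epsilon = a\bar b/(|b|^2+\epsilon)$, $|c_\epsilon|\leq 1$, which gives $\|m(a)x\|\leq\|m(b)x\|$ in one stroke; and in part~2 the inequality $(\lambda-1/n)g_n\leq fg_n$ as an inequality between moduli needs $n>1/\lambda$ (the case $\lambda=0$ being trivial), a restriction that costs nothing since you let $n\to\infty$ anyway.
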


\begin{remark} \label{r1} We would like to emphasize that while in the statement of our main result we will suppose only that $m$ is a bounded homomorphism, in its proof, in virtue of Lemma~\ref{l1} we will assume that it is an isometry.

\end{remark}

\begin{definition} \label{d1} Let $x \in X$. The cyclic subspace $X(x)$ in $X$ is defined as

$$ X(x) = cl \{m(a)x, \; a \in C(K)\}, $$
where $cl$ denotes the closure of a set.

A Banach $C(K)$-module is called a \textbf{cyclic Banach space} if there is an $x \in X$ such that $X = X(x)$.

\end{definition}

The following important and well known lemma is a consequence of more general results proved in~\cite{HO2} and~\cite{AAK}. Special cases were considered earlier by Veksler~\cite{Ve} and Schaefer~\cite{Sch}. A direct proof is in~\cite[Lemma 2]{OO}

\begin{lemma} \label{l2} Let $m: C(K) \rightarrow L(X)$ be a contractive unital homomorphism. Let $X = X(x_0)$ for some $x_0 \in X$, i.e. $X$ is a cyclic Banach space with a cyclic vector $x_0$. Then
\begin{enumerate}
  \item $X$ can be represented as a Banach lattice with quasi-interior point $x_0$.
  \item The cone of the positive elements in $X$ is the closure in $X$ of the set $\{m(a)x_0: \; a \in C(K), a \geq 0\}$.
  \item The center Z(X) of the Banach lattice $X$ can be identified with the closure of $m(C(K))$ in the weak operator topology on $L(X)$.
  \item The unit ball of $Z(X)$ is the closure of the unit ball of $m(C(K))$ in the weak operator topology.
  \item If $x$ is a quasi-interior point in the Banach lattice $X$ then for the order ideal $I_x$ generated by $x$ we have $I_x = Z(X)x$.
\end{enumerate}

\end{lemma}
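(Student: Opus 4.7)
The plan is to construct a concrete Banach lattice realization of $X$ in which $m(a)$ acts by multiplication by $a$, and then read off (1)--(5) from standard facts about such representations.

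First I would set $p(a) := \|m(a)x_0\|$ for $a \in C(K)$. By Lemma~\ref{l1}(1), $|a|\le|b|$ implies $p(a)\le p(b)$, so $p$ is a lattice seminorm on $C(K)$. The operator $T\colon C(K)\to X$, $Ta := m(a)x_0$, is then a linear isometry from $(C(K)/\ker p,\ p)$ onto a dense subspace of $X$ (by cyclicity of $x_0$), and so extends to an isometric Banach space isomorphism from the completion $E$ of $(C(K)/\ker p,\ p)$ onto $X$. Because $p$ is a lattice seminorm, $E$ carries a canonical Banach lattice structure; transporting it through $T$ endows $X$ with a Banach lattice structure. The constant function $\mathbf{1}\in C(K)$ is an order unit, hence its image $T\mathbf{1}=m(1)x_0=x_0$ is a quasi-interior point, proving (1). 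Continuity of the lattice operations together with density of $T(C(K)_+)$ in $X_+$ yields (2).

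For (3) and (4), under the identification $X\cong E$ each $m(a)$ acts on the dense subspace $T(C(K))$ by $m(a)(Tb)=T(ab)$, i.e.\ as ordinary multiplication in the function model. Such multiplications are order-bounded and commute with all band projections, so $m(C(K))\subset Z(X)$; since $Z(X)$ is WOT-closed in $L(X)$, the WOT-closure of $m(C(K))$ lies in $Z(X)$. For the reverse inclusion, cyclicity of $x_0$ implies that each $S\in Z(X)$ is determined by $Sx_0\in I_{x_0}$. In the functional representation, $S$ corresponds to multiplication by a bounded Borel function $f$ with $\|f\|_\infty = \|S\|_{Z(X)}$; a Lusin-type approximation produces $a_n\in C(K)$ with $\|a_n\|_\infty\le\|S\|$ and $m(a_n)x_0\to Sx_0$ in $X$. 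Since the operators $m(a_n)$ are uniformly bounded by $\|S\|$ and commute with every $m(b)$, $b\in C(K)$, the WOT-convergence on the dense set $\{m(b)x_0:b\in C(K)\}$ extends to all of $X$. This simultaneously yields (3) and the unit-ball statement (4).

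For (5), the inclusion $Z(X)x\subset I_x$ is immediate from the definition of the center. Conversely, given $y\in I_x$ with $|y|\le\lambda|x|$, the quasi-interior hypothesis makes $x$ strictly positive in the functional representation, so $y/x$ is a well-defined bounded Borel function whose equivalence class defines an $S\in Z(X)$ with $Sx=y$. The principal technical obstacle I anticipate is the WOT-density step in the proof of (3)--(4): producing the approximating sequence $(a_n)$ inside the correct norm ball is where cyclicity of $x_0$ is indispensable, and where the passage from an abstract Banach lattice to the concrete measure-theoretic representation must be carried out carefully.
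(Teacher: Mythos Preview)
The paper does not supply its own proof of this lemma: it is stated as a known result, with references to \cite{HO2}, \cite{AAK}, \cite{Ve}, \cite{Sch}, and a pointer to a direct proof in \cite[Lemma~2]{OO}. There is therefore no in-paper argument to compare against, only your proposal to assess on its own terms.

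Your construction for (1) and (2) via the lattice seminorm $p(a)=\|m(a)x_0\|$ and the completion of $C(K)/\ker p$ is the standard one and is correct.

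The weak point is your treatment of (3)--(5). You repeatedly invoke a ``functional representation'' in which elements of $Z(X)$ become multiplication by bounded Borel functions and then appeal to a ``Lusin-type approximation''. No measure has been introduced, and the completion $E$ is an abstract Banach lattice, not a priori an $L^0(\mu)$-type space; so as written these steps are not justified. You flag this yourself as the principal obstacle, but you do not resolve it.

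In fact the measure-theoretic detour is unnecessary. For the hard inclusion in (3) and the unit-ball statement (4), take $S\in Z(X)$ with $0\le S\le I$. By (2) choose $a_n\ge 0$ in $C(K)$ with $m(a_n)x_0\to Sx_0$ in norm. Replace $a_n$ by $b_n:=a_n\wedge \mathbf{1}$; then $\|b_n\|\le 1$, and since $m$ is a lattice homomorphism into $Z(X)$ one checks $m(b_n)x_0=m(a_n)x_0\wedge x_0\to Sx_0\wedge x_0=Sx_0$. Now $\{m(b_n)\}$ is uniformly bounded, commutes with every $m(c)$, and converges on the cyclic vector, hence on the dense set $m(C(K))x_0$, hence (by equicontinuity) in SOT on all of $X$. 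This gives (3) and (4) simultaneously without any Borel functions. Part (5) then follows from the Krein--Kakutani identification of $I_x$ (with its order-unit norm) with $C(\hat K)$ and the observation that $Z(X)$ acts on $I_x$ as $C(\hat K)$ acting on itself by multiplication; no division ``$y/x$'' in a Borel model is needed.
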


Another important tool needed for our results is the following lemma~\cite[Lemma 4(3), equivalence $(b) \Leftrightarrow (c)$]{HO1}.

\begin{lemma} \label{l3} Let $m: C(K) \rightarrow L(X)$ be a bounded unital algebra homomorphism. Then the following statements are equivalent.

\begin{enumerate}
  \item For each $x \in X$ the map $a \rightarrow m(a)x, a \in C(K)$, is compact, provided that $C(K)$ is endowed with its norm topology and $X$ with its weak topology.
  \item  The homomorphism $m$ can be extended uniquely to an algebra homomorphism $\hat{m} : C(K)^{\prime \prime} \rightarrow L(X) $
   which is continuous provided that $C(K)^{\prime \prime}$ is endowed with the weak-star topology and $L(X)$ with the weak operator topology.
\end{enumerate}

\end{lemma}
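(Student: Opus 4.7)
The key observation is that condition (1) is just the statement that, for each $x \in X$, the orbit map $T_x \colon C(K) \to X$ defined by $T_x(a) = m(a)x$ is a weakly compact operator (a bounded linear map is norm-to-weak continuous from bounded sets to relatively weakly compact sets precisely when it is weakly compact). My plan is therefore to prove the two implications separately, using the bidual characterization of weakly compact operators supplied by Gantmacher's theorem, together with Goldstine's theorem to connect the weak-star topology on $C(K)^{\prime\prime}$ with the norm topology on $C(K)$.

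For the direction $(1) \Rightarrow (2)$, I would start from Gantmacher's theorem: each $T_x$ being weakly compact is equivalent to $T_x^{\prime\prime} \colon C(K)^{\prime\prime} \to X^{\prime\prime}$ taking values in $X$, and in that case $T_x^{\prime\prime}$ is weak-star to weak continuous. I then \emph{define} $\hat{m}(F)x := T_x^{\prime\prime}(F)$ for $F \in C(K)^{\prime\prime}$ and $x \in X$. Linearity in $x$ follows from the fact that $x \mapsto T_x$ is linear, boundedness by $\|T_x^{\prime\prime}\| = \|T_x\| \le \|m\|\,\|x\|$, the extension property $\hat{m}|_{C(K)} = m$ is tautological, and the weak-star to WOT continuity of $\hat{m}$ is immediate from the formula $\langle \phi,\hat{m}(F)x\rangle = \langle T_x^{\prime}\phi, F\rangle$, which is weak-star continuous in $F$ for each fixed $x \in X$ and $\phi \in X^{\prime}$. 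The main obstacle in this direction is checking the multiplicative identity $\hat{m}(FG) = \hat{m}(F)\hat{m}(G)$. For this I would invoke the fact that the Arens product makes $C(K)^{\prime\prime}$ a commutative $W^{*}$-algebra in which multiplication is separately weak-star continuous, then pick bounded nets $a_\alpha \to F$ and $b_\beta \to G$ weak-star in $C(K)$ via Goldstine. A two-step limit computation then applies: for each fixed $\alpha$, separate continuity gives $a_\alpha b_\beta \to a_\alpha G$ weak-star in $\beta$, so by the weak-star to weak continuity of $\hat{m}(\cdot)x$ and the weak continuity of the bounded operator $m(a_\alpha)$, one obtains $\hat{m}(a_\alpha G)x = m(a_\alpha)\hat{m}(G)x$; letting $\alpha$ vary and applying the same continuity to $y := \hat{m}(G)x$ yields $\hat{m}(FG)x = \hat{m}(F)y = \hat{m}(F)\hat{m}(G)x$.

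For the reverse direction $(2) \Rightarrow (1)$, I would use Banach--Alaoglu plus Goldstine. The unit ball $B$ of $C(K)^{\prime\prime}$ is weak-star compact, so for each $x \in X$ the weak-star to weak continuous image $\{\hat{m}(F)x : F \in B\}$ is weakly compact in $X$. By Goldstine, the unit ball of $C(K)$ is weak-star dense in $B$, hence $\{m(a)x : \|a\| \le 1\}$ lies in this weakly compact set and is therefore relatively weakly compact, giving (1). Uniqueness of the extension in (2) follows from the same density, since any two weak-star to WOT continuous extensions agreeing on $C(K)$ must agree on all of $C(K)^{\prime\prime}$.
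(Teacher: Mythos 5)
Your argument is correct. Note, however, that the paper itself does not prove Lemma~\ref{l3}: it is quoted from~\cite[Lemma 4]{HO1}, and Remark~\ref{r2} explains that the proof given there proceeds by constructing the Arens extensions of the $C(K)$-module multiplication on $X$ to $C(K)^{\prime\prime}$-module multiplications on $X^{\prime}$ and $X^{\prime\prime}$. Your route is in the same bidual spirit but is assembled more directly: you work with the individual orbit maps $T_x(a)=m(a)x$, use Gantmacher's theorem to see that weak compactness of $T_x$ is exactly the statement that $T_x^{\prime\prime}$ lands in $X$ and is weak-star-to-weak continuous, define $\hat{m}(F)x:=T_x^{\prime\prime}(F)$, and then verify multiplicativity by a two-step limit using Goldstine together with the separate weak-star continuity of the (Arens, equivalently von Neumann algebra) product on $C(K)^{\prime\prime}\cong C(S)$; the converse and uniqueness follow from Alaoglu and Goldstine as you say. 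What your approach buys is self-containment: you only need the standard product structure on $C(K)^{\prime\prime}$ itself, not the full Arens machinery of bidual module actions on $X^{\prime}$ and $X^{\prime\prime}$ that~\cite{HO1} develops (and which is useful there for further purposes); the price is that the module-theoretic information encoded in those extensions (e.g.\ the induced actions on $X^{\prime}$ and $X^{\prime\prime}$) is not produced along the way. The only points worth making explicit in a written version are the two separate-continuity facts you implicitly use for the iterated limit --- that $G\mapsto aG$ is weak-star continuous for fixed $a\in C(K)$ and that $F\mapsto FG$ is weak-star continuous for fixed $G\in C(K)^{\prime\prime}$ --- both of which hold since $C(K)$ is a (commutative) $C^{*}$-algebra, so its bidual is a von Neumann algebra with separately weak-star continuous multiplication.
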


\begin{remark} \label{r2} (a) Note that $C(K)^{\prime \prime} \cong C(S) $ where $S$ is a hyperstonian compact space. The proof of Lemma~\ref{l3} given in~\cite{HO1} requires the use of Arens extension of the $C(K)$-module multiplication on $X$ to a $C(K)^{\prime \prime}$-module multiplication on $X^\prime$ and $X^{\prime \prime}$. Since we will not need the use of this result other than in the above form, we will not introduce Arens extensions here. However, we encourage the interested reader to look up the straightforward exposition and the proof in~\cite[ pp. 75 - 76]{HO1}.

(b) The property (1) in Lemma~\ref{l3} is called the \textbf{weakly compact action} of $m(C(K))$ on $X$. We will use this term in the sequel.

\end{remark}

\section{Dedekind complete and $\sigma$-Dedekind complete cyclic Banach spaces}

In this section we will characterize cyclic Banach spaces which, when represented as
Banach lattices in accordance with Lemma~\ref{l2} are Dedekind complete or $\sigma$-Dedekind complete. A characterization of $\sigma$-Dedekind complete cyclic Banach spaces was obtained by Veksler in~\cite{Ve}. Here we will simplify Veksler's proof by using Lemmas~\ref{l1} and~\ref{l2}. But we need to emphasize that the main idea is still the one used by Veksler.

For the remainder of this section we will assume that the compact space $K$ is \textbf{totally disconnected}. We will denote by $\mathcal{B}$ the Boolean algebra of all the idempotents in $C(K)$. Clearly, $\mathcal{B}$ consists of characteristic functions of the clopen subsets of $K$. The identically one function $\mathbf{1}$ and the identically zero function $\mathbf{0}$ correspond to the identity and the zero of the Boolean algebra $\mathcal{B}$. Let $m : C(K) \rightarrow L(X)$ be a bounded unital algebra homomorphism, notice that the closed subset $K^\prime$ of $K$ defined in Section 2 as the set of common zeros of functions from $\ker{m}$, is also a totally disconnected compact Hausdorff space with the relative topology inherited from $K$.

\begin{definition} \label{d2} Let $x \in X$. An idempotent $e_x \in \mathcal{B}$ is called the \textbf{carrier projection} of $x$ if $m(e_x) x = x$ and
$$ e\in \mathcal{B}, \; m(e)x=x \Rightarrow e_x \leq e \; \mathrm{in} \; C(K). $$

\end{definition}

\begin{remark} \label{r3} Such a projection is called by Veksler the support of $x$ (see~\cite{Ve}). It is worth noticing that when $m(\mathcal{B})$ is a Bade complete Boolean algebra of projections on $X$ (see Definition~\ref{d6} below) the above definition coincides with the Bade's definition of carrier projection in~\cite{DSIII}.

\end{remark}

\begin{lemma} \label{l4} Let $x \in X$ and $e_x$ be the carrier projection of $x$. Then for any $a \in C(K)$ , $m(a)x=0$ if and only if $ae_x =0$.

\end{lemma}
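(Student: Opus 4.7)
The plan is to prove the two implications separately, with the forward direction ($ae_x = 0 \Rightarrow m(a)x = 0$) being essentially trivial and the reverse requiring a small constructive argument that exploits total disconnectedness of $K$.

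For the easy direction, observe that since $m(e_x)x = x$ and $m$ is a unital algebra homomorphism, $m(a)x = m(a)m(e_x)x = m(ae_x)x = m(0)x = 0$ whenever $ae_x = 0$.

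For the converse, I would argue by contradiction: assume $m(a)x = 0$ but $ae_x \neq 0$. Writing $e_x = \chi_F$ for the clopen set $F \subset K$ determined by $e_x$, there exists $k_0 \in F$ with $a(k_0) \neq 0$. Pick $\varepsilon > 0$ with $|a(k_0)| > \varepsilon$. The set $U = F \cap \{k \in K : |a(k)| > \varepsilon\}$ is an open neighborhood of $k_0$, and since $K$ is totally disconnected (and compact Hausdorff), the clopen sets form a base for the topology, so I can choose a nonempty clopen set $E$ with $k_0 \in E \subset U$. The key construction is then the function
\[
b(k) = \begin{cases} 1/a(k) & k \in E, \\ 0 & k \notin E, \end{cases}
\]
which lies in $C(K)$: on $E$ the expression $1/a$ is continuous because $|a| > \varepsilon$ there, and the extension by $0$ off $E$ is continuous precisely because $E$ is clopen (its boundary is empty). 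By construction $ab = \chi_E$, so
\[
m(\chi_E)x = m(ab)x = m(b)m(a)x = 0.
\]

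Now I exploit the minimality in Definition~\ref{d2}. Since $E \subset F$, the idempotent $e = \chi_F - \chi_E = \chi_{F \setminus E}$ belongs to $\mathcal{B}$ and satisfies
\[
m(e)x = m(\chi_F)x - m(\chi_E)x = m(e_x)x - 0 = x.
\]
By the defining minimality of the carrier projection, $e_x \le e$, i.e.\ $\chi_F \le \chi_{F \setminus E}$, forcing $E \cap F = \emptyset$. This contradicts $k_0 \in E \cap F$, and completes the proof.

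The only real subtlety is the construction of the inverse $b$: it relies simultaneously on the uniform lower bound $|a| \ge \varepsilon$ on $E$ (to make $1/a$ bounded and continuous on $E$) and on $E$ being clopen (so that the zero extension off $E$ glues continuously). Everything else is manipulation of the homomorphism $m$ and the definition of $e_x$.
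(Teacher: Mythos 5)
Your proof is correct, and it takes a genuinely different route from the paper's. The paper argues via Lemma~\ref{l1}(1): it first handles idempotents (if $m(e)x=0$ then $m(1-e)x=x$, so $e_x\le 1-e$), then nonnegative $a$ by trapping an idempotent under $a$ ($0\le\varepsilon e\le a$ with $e(t)=1$) and using the norm monotonicity $\varepsilon\|m(e)x\|\le\|m(a)x\|$, and finally general $a$ by passing to $|a|$; this chain depends on $m$ being contractive (the renorming of Remark~\ref{r1}). You instead construct a local inverse: a clopen $E\subset F$ on which $|a|>\varepsilon$, and $b\in C(K)$ with $ab=\chi_E$, so $m(\chi_E)x=m(b)m(a)x=0$ purely algebraically, after which the minimality of $e_x$ applied to $\chi_{F\setminus E}$ gives the contradiction. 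Your argument buys independence from Lemma~\ref{l1} and from any contractivity assumption on $m$ --- only the homomorphism property, total disconnectedness, and the definition of the carrier projection are used --- and it treats real and complex $a$ uniformly without splitting off $a^\pm$ or $|a|$. The paper's approach, on the other hand, reuses the same norm estimate that drives the later lemmas (Lemmas~\ref{l5}--\ref{l7}) and directly yields the pointwise statement that $e_x$ vanishes wherever $|a|>0$. Both proofs exploit total disconnectedness in the same way, to produce a clopen neighborhood inside $\{|a|>\varepsilon\}$; the steps you flag as subtle (boundedness of $1/a$ on $E$ and clopenness of $E$ for the glued extension) are exactly right and are all that is needed.
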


\begin{proof} Suppose $ae_x = 0$. Then $m(a)x = m(a)m(e_x)x = m(ae_x)x=0$.

Conversely, initially suppose that for some $e \in \mathcal{B}$, $m(e)x = 0$. Then $m((1 - e))x=x$ and therefore $e_x \leq 1- e$. It follows that $ee_x = 0$. Now assume that for some non-negative $a \in C(K)$ we have $m(a)x = 0$ and that for some $t \in K$, $a(t) > 0$. Then, since $K$ is totally disconnected,  there are some  $\varepsilon > 0$ and $e \in \mathcal{B}$ such that $e(t) = 1$ and $0 \leq \varepsilon e \leq a$. Then by Lemma~\ref{l1} (1), $\varepsilon \|m(e)x\| \leq \|m(a)x\| = 0$. Therefore $ee_x = 0$ and $e_x(t) = 0$, hence $ae = 0$.

Finally, if $m(a)x = 0$ for some $a \in C(K)$ then applying again Lemma~\ref{l1} (1) we see that $m(|a|)x = 0$ hence $|a|e_x =0$, and therefore $ae_x = 0$.

\end{proof}

\begin{definition} \label{d4} We will say that a Banach $C(K)$-module $X$ is a \textbf{Veksler module} if any $x \in X \setminus \{0\}$ has a carrier projection $e_x \in C(K)$.

\end{definition}

\begin{remark} \label{r4}  Cyclic Banach spaces with the property stated in Definition~\ref{d4} were introduced by Veksler in~\cite{Ve}. They were also considered by Rall~\cite{Ra} who called the corresponding Boolean algebra $\mathcal{B}$ $\tau$-complete.

\end{remark}

Assume that $X$ is a Veksler module. For every $x \in X \setminus \{0\}$ we will denote the clopen support of $e_x$ in $K$ by $K_x$. Clearly, $K_x$ is the Stone representation space of the Boolean algebra $e_x\mathcal{B}$.

Let us recall that a compact Hausdorff space $K$ is called quasi-Stonian if it is basically disconnected, i.e. the closure of every open $G_\delta$ set in $K$ is open. It is also worth recalling that the following properties are equivalent:
\begin{enumerate}
  \item $K$ is quasi-Stonian.
  \item $C(K)$ is $\sigma$-Dedekind complete.
  \item Every non-negative sequence bounded from above in $C(K)$ has a supremum in $C(K)$.
  \item Every principal band in $C(K)$ is a projection band.
\end{enumerate}

\begin{lemma} \label{l5} Let $X$ be a Veksler module with respect to the Boolean algebra $\mathcal{B}$. Then for every $x \in X \setminus \{0\}$, the Banach algebra $C(K_x)$ is a Veksler module with respect to the Boolean algebra $e_x \mathcal{B}$. Moreover, the compact space $K_x$ is quasi-Stonian.

\end{lemma}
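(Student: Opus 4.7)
The plan is to derive the two conclusions in sequence: first exhibit, for every non-zero $f \in C(K_x)$, an explicit carrier projection living in $e_x\mathcal B$, and then extract the quasi-Stonian property of $K_x$ from the fact that \emph{every} non-negative element of $C(K_x)$ now has a carrier projection. The setup I would use throughout is that, since $e_x \in \mathcal B$ corresponds to the clopen set $K_x \subseteq K$, any $f \in C(K_x)$ has a canonical extension by zero $\tilde f \in C(K)$ satisfying $\tilde f\, e_x = \tilde f$, and, conversely, multiplication of such extensions in $C(K)$ restricts to the multiplication in $C(K_x)$.

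For the carrier-projection property, fix a non-zero $f \in C(K_x)$ and form $y = m(\tilde f)x \in X$. Because $\tilde f\, e_x = \tilde f \neq 0$, Lemma~\ref{l4} forces $y \neq 0$, so the Veksler hypothesis on $X$ produces a carrier projection $e_y \in \mathcal B$. A single application of Lemma~\ref{l4} to $(1-e_x)\tilde f = 0$ gives $m(1-e_x)y = 0$, hence $m(e_x)y = y$ and, by minimality, $e_y \le e_x$, i.e.\ $e_y \in e_x\mathcal B$. I would then check the two defining properties of a carrier projection for $f$: from $m(1-e_y)y = 0$ and Lemma~\ref{l4} one gets $(1-e_y)\tilde f\, e_x = 0$, which equals $(1-e_y)\tilde f$, so restricting to $K_x$ yields $e_y f = f$; and if $e \in e_x\mathcal B$ satisfies $ef = f$ in $C(K_x)$, then $e\tilde f = \tilde f$ in $C(K)$ (both sides agree on $K_x$ and vanish off $K_x$), so $m(e)y = m(e\tilde f)x = y$ and the minimality of $e_y$ gives $e_y \le e$. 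This simultaneously shows that $C(K_x)$ is a Veksler module over $e_x\mathcal B$ and identifies its carrier projections concretely.

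Once this is in hand, the quasi-Stonian conclusion is essentially automatic. For a non-negative $g \in C(K_x)$ with carrier projection $e_g \in e_x\mathcal B$, the relation $e_g g = g$ forces $\{g>0\} \subseteq \{e_g = 1\}$, while the minimality of $e_g$ forces $\{e_g=1\}$ to be the smallest clopen subset of $K_x$ containing $\{g>0\}$; since $\{e_g=1\}$ is in particular closed, this identifies it with $\overline{\{g>0\}}$. Every cozero set in $K_x$ arises as $\{g>0\}$ for some non-negative continuous $g$, so every cozero set has clopen closure in $K_x$, which is exactly the statement that $K_x$ is basically disconnected, i.e.\ quasi-Stonian. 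The only real obstacle in the argument is the bookkeeping in the previous paragraph: one must pass carefully between $C(K_x)$ and the ideal $e_x C(K) \subseteq C(K)$ and apply Lemma~\ref{l4} in both directions to transfer the carrier-projection property from $y \in X$ to $f \in C(K_x)$; everything else reduces to standard facts about idempotents in $C(K)$.
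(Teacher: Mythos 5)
Your handling of the carrier projections is essentially the paper's argument: for $f\in C(K_x)$ you pass to $y=m(\tilde f)x$, use the Veksler hypothesis to get $e_y$, and transfer the two defining properties back and forth with Lemma~\ref{l4}; that part is complete and matches the identification $e_f=e_{m(\tilde f)x}$ made in the paper. For the quasi-Stonian conclusion you diverge mildly: the paper deduces from $\{a\}^{\perp}=(1-e_{m(a)x})C(K_x)$ that every principal band of $C(K_x)$ is a projection band, while you argue topologically that the closure of every cozero set $\{g>0\}$ is the clopen set $\{e_g=1\}$, i.e.\ that $K_x$ is basically disconnected. Both are among the standard characterizations of quasi-Stonian spaces recorded just before the lemma, so the route itself is legitimate.

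There is, however, one step you treat as automatic that is not: from the two facts that $\{e_g=1\}$ is the smallest clopen superset of $\{g>0\}$ and that it is closed, it does not follow that it equals $\overline{\{g>0\}}$; closedness only gives the inclusion $\overline{\{g>0\}}\subseteq\{e_g=1\}$, and in principle a smallest clopen superset could strictly contain the closure. The missing ingredient is the zero-dimensionality of $K_x$ (it is compact, Hausdorff and totally disconnected, being clopen in $K$): if $t\in\{e_g=1\}\setminus\overline{\{g>0\}}$, choose a clopen $V\subseteq K_x$ with $t\in V$ and $V\cap\overline{\{g>0\}}=\emptyset$, and let $v\in e_x\mathcal{B}$ be its characteristic function; then $(e_g-ve_g)g=g$ while $e_g-ve_g$ is strictly below $e_g$, contradicting the minimality of $e_g$. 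Hence $\{e_g=1\}=\overline{\{g>0\}}$ after all. With that sentence inserted your argument is complete; it is the same kind of use of total disconnectedness that the paper makes inside the proof of Lemma~\ref{l4}.
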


\begin{proof} Let $x\in X$ and let $e_x \in \mathcal{B}$ be the corresponding carrier projection. Let $a \in C(K_x)$. We extend $a$ as a continuous function on $K$ by letting $a \equiv 0$ on $K \setminus K_x$. We will identify $a$ and its extension on $K$. We claim that $e_{m(a)x}$ is the carrier projection for $a$ in $e_x \mathcal{B}$. Note that $m(1-e_x)m(a)x=0$. Therefore $(1-e_x)e_{m(a)x}$=0 and $e_{m(a)x} \in e_x\mathcal{B}$. Now suppose that for some $b \in C(K_x)$, $ba = 0$. Then $m(b)m(a)x=0$, and consequently $be_{m(a)x} = 0$. Conversely, suppose $be_{m(a)x} = 0$. Then $m(b)m(a)x=0$ and therefore $bae_x=0$. Hence $e_{m(a)x}$ is the carrier projection of $a \in C(K_x)$. This also means that $\{a\}^\perp = (1 - e_{m(a)x})C(K_x)$. Therefore the principal band $\{a\}^{\perp \perp} = e_{m(a)x}C(K_x)$ is a projection band and the space $K_x$ is quasi-Stonian.

\end{proof}

The critical property of cyclic subspaces of a Veksler module is stated in the next lemma due to Veksler~\cite{Ve}. For the sake of completeness we will give a simplified proof of this result.

\begin{lemma} \label{l6} Let $X$ be a cyclic Veksler module with a cyclic vector $x_0$. If $x_n \rightarrow x$, $y_n \rightarrow y$ and for each $n$, $e_{x_n}e_{y_n}=0$ then $e_xe_y=0$.

\end{lemma}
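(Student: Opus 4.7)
The plan is to use Lemma~\ref{l2} to pass to the Banach-lattice picture on $X$, derive $x_n\perp y_n$ from their disjoint carriers, push this disjointness to the limit by norm-continuity, and then convert lattice disjointness back into disjointness of the carriers $e_x$ and $e_y$.

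By Lemma~\ref{l2}, view $X$ as a Banach lattice with $x_0$ a quasi-interior point; for every $e\in\mathcal{B}$ the operator $m(e)$ is a band projection in $Z(X)$. From $e_{x_n}e_{y_n}=0$ we get $m(e_{x_n})\,m(e_{y_n})=m(e_{x_n}e_{y_n})=0$, so the bands $m(e_{x_n})X$ and $m(e_{y_n})X$ are complementary. Since $x_n=m(e_{x_n})x_n$ lies in the first and $y_n=m(e_{y_n})y_n$ in the second, $x_n\perp y_n$ in the Banach lattice. Norm-continuity of $|\cdot|\wedge|\cdot|$ then yields $|x|\wedge|y|=\lim_n\bigl(|x_n|\wedge|y_n|\bigr)=0$, so $x\perp y$.

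To upgrade $x\perp y$ to $e_xe_y=0$, suppose for contradiction that $e:=e_xe_y\neq 0$. Since $e\le e_x$ and $e\le e_y$, Lemma~\ref{l4} gives $m(e)|x|\ne 0$ and $m(e)|y|\ne 0$; both are positive elements of the band $m(e)X$ satisfying $m(e)|x|\wedge m(e)|y|\le|x|\wedge|y|=0$. A direct computation (along the lines of the proof of Lemma~\ref{l5}, using $e\le e_x,e_y$ and Lemma~\ref{l4}) shows that each has carrier equal to the unit $e$ of the Boolean algebra $e\mathcal{B}$. Thus the cyclic Veksler sub-module $m(e)X$ (with cyclic vector $m(e)x_0$ and Boolean algebra $e\mathcal{B}$, whose Stone space $K_e$ is quasi-Stonian by Lemma~\ref{l5}) contains two nonzero positive disjoint elements, each with full carrier.

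The main obstacle is the auxiliary claim that this configuration cannot occur: in a cyclic Veksler module, any positive element with full carrier has trivial disjoint complement in the Banach lattice. Once established, applied inside $m(e)X$ it makes $m(e)|y|$ have trivial disjoint complement, forcing $m(e)|x|=0$ --- the desired contradiction. For the auxiliary claim, my plan is to combine Lemma~\ref{l4} with the $\sigma$-completeness of $e_u\mathcal{B}$ (provided by Lemma~\ref{l5} via the quasi-Stonian $K_u$): given a full-carrier $u$ and a nonzero positive $w\perp u$, descend to the sub-module $m(e_w)X$, where one finds $m(e_w)u\ne 0$ and $w\ne 0$ with disjoint positive parts; a $\sigma$-supremum over carriers of approximants of $u$ and $w$ drawn from $m(C(K))x_0$ (which is dense in $X$ by cyclicity) must then coincide with the unit of $e_w\mathcal{B}$, producing a nonzero element in the intersection of their supporting bands and contradicting disjointness.
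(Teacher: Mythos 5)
Your first half is sound: disjoint carriers give $x_n\perp y_n$ in the lattice representation of Lemma~\ref{l2} (the bands $m(e_{x_n})X$ and $m(e_{y_n})X$ are disjoint, not ``complementary'', but disjointness is all you need), norm continuity of the lattice operations gives $x\perp y$, and via Lemma~\ref{l4} the elements $m(e)|x|$ and $m(e)|y|$, with $e=e_xe_y$, are indeed nonzero, disjoint, and of full carrier $e$ in $e\mathcal{B}$. The genuine gap is your ``auxiliary claim'' that in a cyclic Veksler module a positive element with full carrier has trivial disjoint complement --- equivalently, that lattice-disjoint positive elements have disjoint carrier projections in $\mathcal{B}$. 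Nothing of the sort is available at this stage of the paper: only in Lemma~\ref{l7} (whose proof uses Lemma~\ref{l6}) does one learn that $Z(X)=C(K)$ and that $X$ is $\sigma$-Dedekind complete, which is what puts principal band projections inside $\mathcal{B}$ and makes ``full carrier $\Rightarrow$ weak order unit'' true. In fact your claim is essentially equivalent to Lemma~\ref{l6}: given $u\wedge w=0$ with $u,w\ge 0$, choose $0\le a_n,b_n\in C(K)$ with $a_nx_0\to u$, $b_nx_0\to w$; since $(a_n\wedge b_n)x_0\to u\wedge w=0$, replacing $a_n,b_n$ by $a_n-a_n\wedge b_n$ and $b_n-a_n\wedge b_n$ produces approximating sequences that are disjoint in $C(K)$ and hence (as in the proof of Lemma~\ref{l5}) have disjoint carriers, so the natural route to your claim is an application of the very lemma being proved. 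You have therefore reduced the lemma to an equivalent statement rather than proved it.

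Your closing sketch does not repair this. The carriers of the approximants are not monotonically related to $e_u$ or $e_w$, and a $\sigma$-supremum of them being the unit of $e_w\mathcal{B}$ only yields an idempotent meeting both carriers; that does not contradict $u\wedge w=0$, because whether idempotents of $\mathcal{B}$ detect lattice disjointness is precisely the point at issue. What is missing is the device the paper's proof supplies: after passing to a subsequence with $\|x-e_{x_n}x\|<2^{-n}$, it forms the decreasing products $x_n^m=(e_{x_m}e_{x_{m+1}}\cdots e_{x_n})x$, shows by a telescoping estimate that they converge to elements $x^m$ satisfying $e_{x^m}\le e_{x_n}$ for all $n\ge m$ and $x^m\to x$ (similarly for $y$), and then passes to the limit in $e_{y^m}x^n=0$ to get $e_xy=0$ and hence $e_xe_y=0$. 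Some substitute for this limiting construction is required; as written, your argument is circular at the decisive step.
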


\begin{proof}Because $m$ is an isometry throughout this proof we will identify $a \in C(K)$ and $m(a) \in L(X)$. We assume that by Lemma~\ref{l2}, $X$ has been represented as a Banach lattice with quasi interior point $x_0$. Since $C(K) \subseteq Z(X)$, we have for some $a \in C(K)$ and $x \in X$ that $ax =0$ if and only if $a|x|=0$. Therefore the carrier projections of $x$ and $|x|$ are the same. So it is sufficient to consider non-negative sequences $\{x_n\}$ and $\{y_n\}$. Also, since $x_n \rightarrow x$ implies $e_x x_n \rightarrow e_x x=x$, we can assume that $e_{x_n} \leq e_x$ and $e_{y_n} \leq e_y$. Next we notice that $e_{x_n}x \rightarrow x$ and $e_{y_n}y \rightarrow y$. Indeed,
$$ \|e_{x_n}x-x\| \leq \|e_{x_n}(x-x_n)\| +\|x_n - x\| \leq 2\|x_n - x\|.$$
We may assume that $\|x-e_{x_n}x\| < \frac{1}{2^n}$ and $\|y-e_{y_n}y\| < \frac{1}{2^n}$ for $n \in \mathds{N}$.  For any $m \in \mathds{N}$ consider the decreasing sequence
$x_n^m = (e_{x_m}e_{x_{m+1}} \ldots e_{x_n})x $ where $n \geq m$. Let $m \leq n$ and $p \in \mathds{N}$. Then
$$ x_n^m - x_{n+p}^m = (e_{x_m}e_{x_{m+1}} \ldots e_{x_n})[(1-e_{x_{n+1}}) + e_{x_{n+1}}(1- e_{x_{n+2}}) + \ldots $$

$$ + e_{x_{n+1}}e_{x_{n+2}} \ldots e_{x_{n+p-1}}(1 - e_{x_{n+p}})]x. $$
Therefore $\|x_n^m - x_n^{m+p}\| \leq \frac{1}{2^n}$ and we can assume that $x_n^m \mathop \rightarrow \limits_{n \to \infty} x^m \in X $. Similarly $y_n^m \rightarrow y^m \in X$. Since both sequences are decreasing, for all $n \geq m$ we have $e_{x^m} \leq e_{x_n}$ and
$e_{y^m} \leq e_{y_n}$.

It is clear that $\{x^m\}$ and $\{y^m\}$ are increasing sequences and that $x$ and $y$ are upper bounds for each sequence, respectively. Next consider for any fixed $m$ the increasing sequence $\{x - x_n^m\}_{n \geq m}$. We have
$$ x - x_n^m = [(1-e_{x_{m}}) + e_{x_{m}}(1- e_{x_{m+1}}) + e_{x_m}e_{x_{m+1}}(1- e_{x_{m+2}})+ \ldots $$

$$ + e_{x_{m}}e_{x_{m+1}} \ldots e_{x_{n-1}}(1 - e_{x_{n}})]x.$$
It follows easily that $\|x-x_n^m\| < \frac{1}{2^{m-1}}$. Hence, passing to the limit, we have
$\|x - x^m\| \leq \frac{1}{2^{m-1}}$. Therefore $x^m \rightarrow x$ and $y^m \rightarrow y$. Since $e_{y^m}e_{x^n} = 0$ for all $n \geq m$, we have $e_{y^m}x^n = 0$, $n \geq m$. Hence,
$e_{y^m}x = 0$ and therefore $e_{y^m}e_x=0$ for all $m$. Then $e_x y^m = 0$ for all $m$, hence $e_x y=0$. It follows that $e_xe_y = 0$.

\end{proof}

\begin{lemma} \label{l7} (Veksler) A cyclic Veksler module represented as a Banach lattice is $\sigma$-Dedekind complete.

\end{lemma}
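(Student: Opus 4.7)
First, I would establish that $K$ itself is quasi-Stonian. Because $X = X(x_0)$ is cyclic and $m$ is injective, one has $e_{x_0} = \mathbf{1}$: if $m(e) x_0 = x_0$ for some $e \in \mathcal{B}$, then $m((1-e)a) x_0 = m(a) m(1-e) x_0 = 0$ for every $a \in C(K)$; cyclicity gives $m(1-e) = 0$ on $X$, and injectivity of $m$ forces $e = \mathbf{1}$. Applying Lemma~\ref{l5} at $x_0$ then yields that $K_{x_0} = K$ is quasi-Stonian, and in particular the Boolean algebra $\mathcal{B}$ is $\sigma$-complete.

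Second, I would verify that for each $y \in X_+$ the operator $m(e_y)$ is the band projection onto the principal band $\{y\}^{\perp\perp}$. Since $m(C(K)) \subseteq Z(X)$ by Lemma~\ref{l2}, $m(e_y)$ is a positive idempotent in the centre and therefore a band projection. To identify its range, note that Lemma~\ref{l4} gives $m(1-e_y) y = 0$, so the inequality $0 \leq m(1-e_y) z \leq m(1-e_y) y = 0$ forces $m(e_y) z = z$ on $[0,y]$, while Lemma~\ref{l4} also ensures $m(e_y)$ annihilates every element whose carrier is disjoint from $e_y$. This yields the lattice identity $x \wedge y = 0 \iff e_x e_y = 0$ for $x, y \in X_+$.

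Third, given an increasing sequence $\{x_n\} \subseteq X_+$ bounded above by $u$, I would construct $\sup_n x_n$ by constructing $\inf_n z_n$, where $z_n = u - x_n$ is decreasing. The carriers $f_n = e_{z_n}$ form a decreasing sequence in $\mathcal{B}$ with infimum $f$ provided by $\sigma$-completeness. Emulating the nested-product approximation from the proof of Lemma~\ref{l6} --- forming terms like $m(f_m f_{m+1} \cdots f_N) z_m$ with dyadic $2^{-n}$-weights to produce a norm-Cauchy sequence even when the raw differences are not summable --- one extracts a limit $z^{*} \in X_+$ satisfying $z^{*} \leq z_n$ for every $n$ and having carrier $\leq f$.

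The main obstacle is showing $z^{*}$ is actually the greatest lower bound, i.e.\ that $(w - z^{*})^{+} = 0$ for every lower bound $w \in X_+$ of $\{z_n\}$. This is where Lemma~\ref{l6} is invoked in its full closure form: any putative nonzero carrier of $(w - z^{*})^{+}$ would be compelled, by passing the disjointness of appropriate approximants to the norm limit, to be simultaneously $\leq f$ and disjoint from every $f_n$, yielding disjointness from $f = \inf_n f_n$ and hence a contradiction. The delicate point, which Veksler's original argument had to finesse, is arranging the weighted telescoping so that these nested approximants converge in norm; Lemma~\ref{l6}'s property that carrier-disjointness is preserved under norm limits is precisely what the reorganisation requires.
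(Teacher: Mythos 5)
Your opening step (the carrier of $x_0$ is $\mathbf{1}$, hence $K=K_{x_0}$ is quasi-Stonian by Lemma~\ref{l5}) is correct and matches the paper, but after that there is a genuine gap in your second step: the implication $x\wedge y=0\Rightarrow e_xe_y=0$, equivalently the claim that the range of $m(e_y)$ is exactly $\{y\}^{\perp\perp}$, is not proved. Your argument shows only that $m(e_y)$ fixes $[0,y]$ (hence the closed ideal generated by $y$) and that it annihilates elements whose \emph{carrier} is disjoint from $e_y$; but an element lattice-disjoint from $y$ need not have carrier disjoint from $e_y$ --- that is precisely the point at issue, and the closed ideal generated by $y$ is in general strictly smaller than the band $\{y\}^{\perp\perp}$. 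In the paper this identification of carriers with band structure becomes available only after the first half of the proof, where Lemma~\ref{l6} is used to show that $\mathcal{B}$ separates the points of $\hat{K}$, so that $m(C(K))$ is weak-operator closed and $Z(X)=C(K)$; your proposal never establishes $Z(X)=C(K)$ and invokes Lemma~\ref{l6} only vaguely at the end.

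The more serious gap is the construction of the infimum in your third and fourth steps: it cannot work as described, because a $\sigma$-Dedekind complete Banach lattice need not have order continuous norm, so the decreasing sequence $z_n=u-x_n$ need not be norm-Cauchy and neither are the cut-down elements $m(f_mf_{m+1}\cdots f_N)z_m$ (since the $f_n$ decrease, these are just $m(f_N)z_m$; for instance in $\ell^\infty$ over itself with $z_n=\mathbf{1}+\chi_{\{n,n+1,\dots\}}$ all of these stay at mutual distance $1$). The dyadic $2^{-n}$ estimates in the proof of Lemma~\ref{l6} exist only because there the sequences are \emph{assumed} norm convergent, which permits passing to a subsequence with $\|x-e_{x_n}x\|<2^{-n}$; no analogous summability is available here, and you acknowledge but do not resolve exactly this point. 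The paper avoids any norm-limit construction of the supremum: having first proved $Z(X)=C(K)$, it places the $x_n$ in the ideal $I_z=Z(X)z=C(K)z$ of the quasi-interior point $z=x_0\vee y$ (Lemma~\ref{l2}(5)), writes $x_n=a_nz$ with $0\le a_n\le 1$, takes $a=\sup a_n$ as an \emph{order} supremum in the $\sigma$-Dedekind complete algebra $C(K)$ (no norm convergence required), and verifies $az=\sup a_nz$. Without that transfer of the supremum to $C(K)$, your argument has no mechanism for producing the supremum in $X$.
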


\begin{proof} It follows from Lemmas~\ref{l6} and~\ref{l5} that $m(C(K))$ is weak-operator closed. To see this, note that by Lemma~\ref{l2}, the weak operator closure of $C(K)$ is $Z(X) = C(\hat{K})$. Since $\mathcal{B} \subset C(K) \subseteq C(\hat{K})$ it is sufficient to prove that $\mathcal{B}$ separates the points of $\hat{K}$. Let $s, t$ be two distinct points in $\hat{K}$. Then there is $\Phi \in C(\hat{K})$ such that $-1 \leq \Phi \leq 1$, $\Phi(s)=1$, and $\Phi(t) = -1$. Let $x_0$ be a cyclic vector in $X$. Without loss of generality we will assume that the carrier projection of $x_0$ is $\mathbf{1} \in  \mathcal{B}$. There is a sequence $\{a_n\}$ in $C(K)$ such that $\|a_n x_0 - \Phi x_0\| \rightarrow 0$. Since $|a_n^+ - \Phi^+| \leq |a_n - \Phi|$ and $|a_n^- - \Phi^-| \leq |a_n - \Phi|$, where the lattice operations are considered in $C(\hat{K}) = Z(X)$, we have  $\|(a_n^+ - \Phi^+)x_0\| \leq \|(a_n - \Phi)x_0\|$ and $\|(a_n^- - \Phi^-)x_0\| \leq \|(a_n - \Phi)x_0\|$. Thus $a_n^+ x_0 \rightarrow \Phi^+x_0$ and $a_n^- x_0 \rightarrow \Phi^-x_0$ in norm in $X$. Let $e_n, e_n^\prime \in \mathcal{B}$ be the carrier projections of $a_n^+$ and $a_n^-$ in $C(K)$, respectively (they exist because $K$ is quasi-Stonian). Then clearly $e_n^\prime \leq 1 - e_n$. By Lemma~\ref{l5}, $e_{a_n^+x_0}=e_n$ and $e_{a_n^-x_0}=e_n^\prime$.
Let $e$ and $e^\prime$ be carrier projections in $\mathcal{B}$ of $\Phi^+x_0$ and $\Phi^-x_0$, respectively. By Lemma~\ref{l6}, $ee^\prime = 0$. So $e \Phi^-x_0 = 0$ and, because $x_0$ is a $C(K)$-cyclic vector, $e\Phi^-=0$, as an operator in $Z(X)$, hence $e(t)=0$. Moreover, $ (1-e)\Phi^+x_0  =0$ hence $(1-e)\Phi^+=0$ and $e(s)=1$. Thus, $K = \hat{K}$ and $Z(X) = C(K)$.

Assume that $x_n \in X_+$ and $x_n \leq y \in X, n \in \mathds{N}$. Let $z=x_0 \vee y$, then $z$ is a quasi-interior element in $X$ and $x_n \in I_z$ where $I_z$ is the order ideal in $X$ generated by the quasi-interior element $z$. Then, by Lemma~\ref{l2}(5) (actually, by the Krein-Kakutani theorem),  $I_z = Z(X)z$. Hence there are $a_n \in C(K) = Z(X)$ such that $0 \leq a_n \leq 1$ and $x_n = a_n z$. Recall that $K$ is quasi-Stonian, hence $C(K)$ is $\sigma$-Dedekind complete. Let $a = \sup \limits_{n \in \mathds{N}} a_n \in C(K)$. It is immediate to see that $az = \sup \limits_{n \in \mathds{N}} a_n z \in X$.

\end{proof}

It is well known that if $E$ is a Dedekind complete vector lattice its center $Z(E) = C(K)$ is also Dedekind complete, hence $K$ is a Stonian (extremally disconnected) compact space. An analog of this property for Banach $C(K)$-modules was first considered by Kaplansky~\cite{Ka}.

\begin{definition} \label{d5} A Banach $C(K)$-module $X$ is called a \textbf{Kaplansky module} if it satisfies the following two conditions.
\begin{enumerate}
  \item The compact space $K$ is Stonian.
  \item For any $x \in X$ and for any non-negative set $\{a_\alpha\}$ bounded above  in $C(K)$ the following implication holds
      $$ a_\alpha x = 0, \; \text{for all} \; \alpha \Rightarrow ax=0, \; \mathrm{where} \; a = \sup a_\alpha.$$
\end{enumerate}

\end{definition}

It is easy to see that any Kaplansky module is also a Veksler module and that any Dedekind complete Banach lattice $X$ is a Kaplansky module over its center $Z(X)$. The lemma below shows that the converse of the last statement is true for cyclic Kaplansky modules.

\begin{lemma} \label{l8} Let $X$ be a cyclic Kaplansky module over $C(K)$ and let $x_0 \in X$ be a cyclic element of $X$. Then when represented as a Banach lattice with the quasi-interior point $x_0$, $X$ is Dedekind complete.

\end{lemma}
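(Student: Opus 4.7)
The plan is to apply Lemma~\ref{l7} and then upgrade $\sigma$-Dedekind completeness to full Dedekind completeness using the extra Kaplansky hypotheses. Since any Kaplansky module is in particular a Veksler module, Lemma~\ref{l7} applies and (together with Lemma~\ref{l2}) represents $X$ as a $\sigma$-Dedekind complete Banach lattice with quasi-interior point $x_0$ and center $Z(X) = C(K)$. By standard vector-lattice reductions, it suffices to produce a supremum for an arbitrary upward directed net $\{x_\alpha\}_{\alpha \in A} \subseteq X_+$ bounded above by some $y \in X_+$.

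Given such a net, I would set $z := x_0 \vee y$. Then $z \ge x_0$, so $z$ is again a quasi-interior point; because $x_0$ is cyclic its carrier projection is $\mathbf{1}$, and from $z \ge x_0 \ge 0$ one deduces that the carrier projection of $z$ is also $\mathbf{1}$, so by Lemma~\ref{l4} the element $z$ has full support in the sense that $a \in C(K)$, $az = 0 \Rightarrow a = 0$. Each $x_\alpha$ lies in $[0,z] \subseteq I_z$, and Lemma~\ref{l2}(5) gives $I_z = Z(X)z = C(K)z$, so we may write $x_\alpha = a_\alpha z$ with $0 \le a_\alpha \le \mathbf{1}$ in $C(K)$.

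The central technical device is the following order-dictionary: for any $a \in C(K) = Z(X)$, the parts $a^+, a^-$ are disjoint in $Z(X)$, so $a^+z$ and $a^-z$ are disjoint positive elements of $X$; hence $az \ge 0$ in $X$ iff $a^- z = 0$, which by the full support of $z$ is equivalent to $a \ge 0$ in $C(K)$. This immediately makes $\{a_\alpha\}$ upward directed in $C(K)$. Since $K$ is Stonian by Definition~\ref{d5}(1), $C(K)$ is Dedekind complete, so $a := \sup_\alpha a_\alpha$ exists in $C(K)$, and clearly $az \ge x_\alpha$ for every $\alpha$.

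To finish, let $w \in X$ be any upper bound of $\{x_\alpha\}$; we must show $az \le w$. Fix $\alpha_0 \in A$ so that $w \ge x_{\alpha_0} \ge 0$, and put $u := (az - w)^+$; then $0 \le u \le az$, whence $u = cz$ for some $c \in C(K)$ with $c \ge 0$. From $az - w \le (a - a_\alpha)z$ we obtain $u \le (a - a_\alpha)z$, and the dictionary above yields $c \le a - a_\alpha$ in $C(K)$ for every $\alpha$. Hence $c \le \inf_\alpha (a - a_\alpha) = 0$ in the Dedekind complete lattice $C(K)$, forcing $c = 0$, $u = 0$ and $az \le w$. The main obstacle is not computational but precisely this clean transfer of inequalities between $X$ and $C(K)$ along the orbit $C(K)z$; it is what simultaneously forces the full support of $z$ (coming from $x_0$ being cyclic) and the Stonian hypothesis on $K$ to enter the argument.
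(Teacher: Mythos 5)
Your proposal is correct and follows essentially the same route as the paper: invoke Lemma~\ref{l7} (Kaplansky $\Rightarrow$ Veksler) to get $Z(X)=C(K)$, then repeat the argument of Lemma~\ref{l7} with $z=x_0\vee y$, writing the bounded family as $a_\alpha z$ and taking $a=\sup a_\alpha$ in the Dedekind complete $C(K)$. The only difference is that you spell out, via the full support of $z$ and the positivity transfer $az\ge 0 \Leftrightarrow a\ge 0$, the verification that $az$ is the least upper bound, which the paper dismisses as immediate.
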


\begin{proof} Because, as a Kaplansky module, $X$ is also a Veksler module we can apply Lemma~\ref{l7} to conclude that $Z(X) = C(K)$. Note that $C(K)$ is Dedekind complete. Then repeat the argument from the proof of Lemma~\ref{l7} to see that any subset of $X_+$ that is bounded above has a supremum in $X$.

\end{proof}

\section{Order continuous Banach $C(K)$-modules}

In this section we will extend to Banach $C(K)$-modules the following classic result of Lozanovsky~\cite{Lo}.

\begin{theorem} \label{t1} Let $E$ be a $\sigma$-Dedekind complete Banach lattice. The following conditions are equivalent.
\begin{enumerate}
  \item The original lattice norm on $E$ is order continuous.
  \item $E$ does not contain $\ell^\infty$ as a closed subspace.
  \item $E$ does not contain $\ell^\infty$ as a closed sublattice.
\end{enumerate}

\end{theorem}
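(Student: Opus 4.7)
The plan is to establish the cyclic implications $(1)\Rightarrow(2)\Rightarrow(3)\Rightarrow(1)$, noting that $(2)\Rightarrow(3)$ is immediate since every closed sublattice of $E$ is a closed subspace of $E$.

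For $(3)\Rightarrow(1)$, I would argue by contraposition. Assuming the norm of $E$ is not order continuous, the standard criterion (this is precisely where the $\sigma$-Dedekind completeness hypothesis is essential, since the norm failure must be witnessed by a \emph{sequence}) produces $\varepsilon>0$, an element $y\in E_+$, and a disjoint sequence $(u_n)\subset[0,y]$ with $\|u_n\|\geq\varepsilon$ for all $n$. Using $\sigma$-Dedekind completeness, for each $\alpha=(\alpha_n)\in\ell^\infty_+$ the disjoint order bounded family $\{\alpha_n u_n\}$ admits a supremum $T_+(\alpha)=\bigvee_n\alpha_n u_n$ in $E$, and extending by $T(\alpha)=T_+(\alpha^+)-T_+(\alpha^-)$ yields a lattice homomorphism with $\varepsilon\|\alpha\|_\infty\leq\|T(\alpha)\|\leq\|y\|\cdot\|\alpha\|_\infty$, hence an isomorphic lattice embedding of $\ell^\infty$ into $E$ whose range is closed.

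For the reverse direction $(1)\Rightarrow(2)$, I would argue by contradiction. Given an order continuous $E$ and a closed subspace embedding $T:\ell^\infty\hookrightarrow E$, the goal is to extract from $T$ a positive, disjoint, order bounded sequence $(v_n)$ in $E$ with $\inf_n\|v_n\|>0$. Once such $(v_n)$ is found, the construction of $(3)\Rightarrow(1)$ applied to $(v_n)$ produces a closed $\ell^\infty$-sublattice of $E$; but the standard unit vectors of this sublattice form an order bounded disjoint sequence of norm $\geq\varepsilon$, which must converge to zero by order continuity, contradiction. To produce the disjoint sequence from the abstract embedding, I would apply a disjointification / gliding-hump argument to the images $T(\chi_{A_n})$, where $(A_n)$ is a sequence of pairwise disjoint infinite subsets of $\mathds N$, using the fact that order continuity together with $\sigma$-Dedekind completeness actually makes $E$ Dedekind complete, so the required suprema and differences in $E$ are available to replace a subsequence of $(T\chi_{A_n})$ by a genuinely disjoint sequence of comparable norm contained in a single order interval of $E$.

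The main obstacle is clearly this extraction step in $(1)\Rightarrow(2)$: the embedding $T$ is given only as a topological isomorphism and need not respect the lattice structure of $E$ in any way, so positive disjoint order bounded elements of $E$ must be manufactured from purely norm-level information about $T$. This is precisely where the $\sigma$-Dedekind completeness hypothesis is doing nontrivial work, in agreement with the introductory remark that the hypothesis cannot be dropped (as the example of $C[0,1]$ shows). By contrast, the other two implications are routine: $(2)\Rightarrow(3)$ is definitional, and $(3)\Rightarrow(1)$ is a direct disjointification computation once the disjoint sequence has been produced.
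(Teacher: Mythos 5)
First, a point of reference: the paper does not prove Theorem~\ref{t1} at all --- it is quoted as Lozanovsky's classical theorem with references to \cite{Lo} and \cite{Wn} --- so your attempt can only be measured against the standard literature proofs. Against that standard, your implications $(2)\Rightarrow(3)$ and $(3)\Rightarrow(1)$ are correct and are exactly the usual argument: in a $\sigma$-Dedekind complete lattice, failure of order continuity yields a disjoint sequence $(u_n)\subset[0,y]$ with $\|u_n\|\geq\varepsilon$, and $\alpha\mapsto\bigvee_n\alpha_n^+u_n-\bigvee_n\alpha_n^-u_n$ is a lattice embedding of $\ell^\infty$ with closed range.

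The gap is in $(1)\Rightarrow(2)$, and it is not a technical loose end but the whole content of the hard direction. Your extraction step uses only norm-level information about the countably many vectors $T(\chi_{A_n})$ and finite linear combinations of them (that is all a gliding-hump/disjointification argument ever sees). But the closed linear span of $\{\chi_{A_n}\}$ in $\ell^\infty$, for pairwise disjoint infinite sets $A_n$, is lattice isometric to $c_0$; so the data you propose to work with is equally available when $T$ is merely an embedding of $c_0$ --- for instance the identity embedding of $c_0$ into $c_0$, which is a Dedekind complete Banach lattice with order continuous norm. Any argument of the kind you sketch would therefore ``prove'' that $c_0$ cannot embed into an order continuous $\sigma$-Dedekind complete lattice, which is false; in particular it cannot produce a disjoint order bounded sequence with norms bounded away from zero. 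Relatedly, your claim that the disjointified vectors can be arranged ``in a single order interval'' is unsupported: Dedekind completeness supplies suprema only of order bounded families, norm boundedness does not give order boundedness in $E$, and since $T$ is not positive there is no candidate upper bound (e.g. $T(\mathbf{1})$ dominates nothing in the order of $E$). Every actual proof of this direction exploits the boundedness of $T$ below on the \emph{uncountable} family $\{\chi_A: A\subseteq\mathds{N}\}$ --- via an uncountable almost disjoint family of subsets of $\mathds{N}$, or via Rosenthal's theorem that a non-weakly-compact operator on $\ell^\infty$ fixes a copy of $\ell^\infty$, combined with properties of order continuous lattices (equivalently, one proves that a subspace copy of $\ell^\infty$ forces a lattice copy, as in Meyer--Nieberg); this is precisely the ingredient missing from your outline. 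A small additional remark: even if your extraction did succeed, rebuilding an $\ell^\infty$-sublattice before contradicting order continuity is redundant, since an order bounded disjoint sequence with norms bounded below already violates order continuity directly.
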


Throughout this section we assume that $X$ is a Banach space either over $\mathds{R}$ or over $\mathds{C}$, $K$ is a compact Hausdorff space, and $m: C(K) \rightarrow L(X)$ is a bounded injective unital algebra homomorphism (and therefore an isometry).

\begin{definition} \label{d6} Let $K$ be a Stonian compact Hausdorff space and $\mathcal{B}$ is the Boolean algebra of all idempotents in $C(K)$. We say that $\mathcal{B}$ is a \textbf{Bade-complete Boolean algebra of projections} on $X$ if for any increasing net $\{e_\alpha\}$ in $\mathcal{B}$ and for any $x \in X$ we have $\|(e - e_\alpha)x\| \rightarrow 0$, where $e = \sup{e_\alpha}$.

\end{definition}

\begin{remark} \label{r5} The above definition is equivalent to the definition of a complete Boolean algebra of projections on a Banach space as given by Bade~\cite{DSIII}.

\end{remark}

We start with our main result concerning cyclic Banach spaces.

\begin{theorem} \label{t2} Let $X$ be a cyclic Banach $C(K)$-module, $x_0$ be a cyclic vector in $X$, and let $\mathcal{B}$ be the Boolean algebra of all idempotents in $C(K)$ . The following conditions are equivalent.
\begin{enumerate}
  \item The compact space $K$ is totally disconnected, $X$ is a Veksler module, and $X$ does not contain a copy of $\ell^\infty$.
  \item The compact space $K$ is Stonian, $X$ is a Kaplansky module, and $X$ does not contain a copy of $\ell^\infty$.
  \item $m(C(K))$ is weak operator closed, and $X$, when represented as a Banach lattice, has order continuous norm.
  \item The compact space $K$ is Stonian and $\mathcal{B}$ is a Bade complete Boolean algebra of projections on $X$.
  \item The compact space $K$ is hyperstonian and $m$ is ($w^\star$, weak-operator)-continuous.
\end{enumerate}

\end{theorem}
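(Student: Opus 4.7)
My plan is to prove the cyclic chain $(1) \Rightarrow (2) \Rightarrow (3) \Rightarrow (4) \Rightarrow (5) \Rightarrow (1)$, leveraging Lemma~\ref{l1} to treat $m$ as an isometry and Lemma~\ref{l2} to view $X$ as a Banach lattice with quasi-interior point $x_0$. The equivalences among (1), (2), (3) will be deduced essentially from Lemma~\ref{l7}, Lemma~\ref{l8}, and Lozanovsky's theorem (Theorem~\ref{t1}); the genuinely new content is bridging the lattice viewpoint with the Bade-theoretic viewpoint (4) and the weak-star viewpoint (5).

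For $(1) \Rightarrow (2)$, Lemma~\ref{l7} shows the cyclic Veksler module $X$ is $\sigma$-Dedekind complete, and its proof establishes $m(C(K)) = Z(X)$. With no copy of $\ell^\infty$, Theorem~\ref{t1} yields order continuity, hence Dedekind completeness of $X$ and of the center $C(K)$, so $K$ is Stonian and $X$ is Kaplansky over $Z(X) = C(K)$. Step $(2) \Rightarrow (3)$ uses that Kaplansky implies Veksler, so Lemma~\ref{l7} again gives weak-operator closedness of $m(C(K))$ and $\sigma$-Dedekind completeness, whence Lozanovsky's theorem delivers order continuity. For $(3) \Rightarrow (4)$, Lemma~\ref{l2}(3) plus weak-operator closedness forces $m(C(K)) = Z(X)$; order continuity upgrades to Dedekind completeness, making $C(K)$ Dedekind complete and $K$ Stonian. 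Bade completeness then follows because, for $e_\alpha \uparrow e$ in $\mathcal{B}$ and any $x \in X$, one verifies $\|(e-e_\alpha)x\| \to 0$ by approximating $x$ by elements of the ideal $I_{x_0} = Z(X)x_0$ and invoking order continuity at $x_0$ via Lemma~\ref{l2}(5).

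The main hurdle is $(4) \Rightarrow (5)$. Here I want to show that for every pair $x \in X$, $x' \in X'$, the functional $\phi_{x,x'}(a) = \langle m(a)x, x' \rangle$ on $C(K)$ is normal. On $\mathcal{B}$ this is immediate from Bade completeness; the upgrade to all of $C(K)$ will require Bade's spectral representation, which writes $m(a)x = \int_K a\,dE_x$ for a spectral measure $E_x$ on clopen sets, so $\phi_{x,x'}$ is the integral of $a$ against a normal scalar measure. Since $m$ is injective and $X'$ separates $X$, these normal functionals separate points of $C(K)$, forcing $K$ to be hyperstonian. The $(w^{*}, \text{WOT})$-continuity of $m$ is then precisely the statement that each $\phi_{x,x'}$ lies in the predual of $C(K)$, which normality delivers.

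Finally, $(5) \Rightarrow (1)$ splits into two pieces. Hyperstonian implies totally disconnected. For the Veksler property, take the downward directed family $\mathcal{F} = \{e \in \mathcal{B} : m(e)x = x\}$ (closed under products), form its infimum $e_x$ in the complete Boolean algebra $\mathcal{B}$, and note that the corresponding bounded net $e_\alpha \downarrow e_x$ converges weak-star in $C(K)$ by normality of the measures on the hyperstonian $K$; $(w^{*}, \text{WOT})$-continuity of $m$ then gives $m(e_x)x = \lim m(e_\alpha)x = x$. To exclude $\ell^\infty$, Lemma~\ref{l3} upgrades $(w^{*}, \text{WOT})$-continuity to weakly compact action, so the operator $T : C(K) \to X$ with $T(a) = m(a)x_0$ is weakly compact with dense range; hence $X$ is weakly compactly generated. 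Since $\ell^\infty$ is an injective Banach space, any isomorphic copy inside $X$ would be complemented, hence itself WCG, contradicting the classical fact that $\ell^\infty$ is not WCG. Apart from $(4) \Rightarrow (5)$, the only other delicate point is this final combination of injectivity of $\ell^\infty$, complementability, and the WCG obstruction.
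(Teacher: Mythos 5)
Your overall architecture is sound, and several steps are genuinely different from the paper's in an interesting way. The lattice-theoretic chain $(1)\Rightarrow(2)\Rightarrow(3)\Rightarrow(4)$ is essentially the paper's argument (the paper proves $\sup_\alpha e_\alpha x = ex$ directly for all $x\geq 0$ using Dedekind completeness, while your approximation through $I_{x_0}=Z(X)x_0$ needs the additional observation that $b\mapsto bx_0$ is an order isomorphism of $Z(X)$ onto $I_{x_0}$, so that $(e-e_\alpha)x_0\downarrow 0$; that is fillable). Your $(5)\Rightarrow(1)$ is a correct alternative route the paper does not take: since $C(K)$ is a dual space and $m$ is ($w^\star$, weak-operator)-continuous, $a\mapsto m(a)x_0$ maps the $w^\star$-compact unit ball onto a weakly compact set with dense span, so $X$ is weakly compactly generated, and injectivity of $\ell^\infty$ together with the fact that complemented subspaces of WCG spaces are WCG excludes $\ell^\infty$; the construction of the carrier projection as $\inf\{e\in\mathcal{B}: m(e)x=x\}$ also works. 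The paper instead proves $(5)\Rightarrow(3)$ via the Kaplansky property, Lemma~\ref{l8}, and a Mazur-type convexity argument, and then recovers the $\ell^\infty$ statement from Lozanovsky's theorem; your route avoids order continuity entirely at this point, which is a genuine (and attractive) difference.

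The genuine gap is in $(4)\Rightarrow(5)$, which is the heart of the theorem and the bulk of the paper's proof. You reduce everything to the assertion that each $\phi_{x,x'}(a)=x'(m(a)x)$ is integration against a \emph{normal} scalar measure supplied by ``Bade's spectral representation.'' But the spectral representation only encodes (at best) countable additivity of $x'E(\cdot)x$; normality --- vanishing on closed nowhere dense subsets of the Stonian space $K$, equivalently order continuity on $C(K)$ --- is exactly what must be proved here, and $\sigma$-additivity does not imply it. For \emph{positive} $\phi_{x,x'}$ it does follow from Bade completeness (the clopen sets disjoint from a closed nowhere dense $D$ form an increasing net with supremum $\mathbf{1}$, and $\|(\mathbf{1}-e_\tau)x\|\to 0$ gives $\phi_{x,x'}(D)=0$), but for general $x'$ the measure is signed (or complex) and one must control its variation; the paper devotes its Zorn's lemma argument, producing an idempotent $e_+$ with $\mu_{e_+x,x'}=\mu_{x,x'}^+$ and $-\mu_{(1-e_+)x,x'}=\mu_{x,x'}^-$, precisely to this point, and your sketch supplies nothing in its place. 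The gap is fixable, even more cheaply than in the paper: if $e_\alpha\uparrow e$ in $\mathcal{B}$ and $c\in C(K)$ satisfies $|c|\leq e-e_\alpha$, then $c=c(e-e_\alpha)$, so $|\phi_{x,x'}(c)|\leq \|m\|\,\|x'\|\,\|(e-e_\alpha)x\|$; hence $|\phi_{x,x'}|(e-e_\alpha)\to 0$, the total variation is completely additive on $\mathcal{B}$, and the positive-case argument applied to $|\phi_{x,x'}|$ yields normality. Some such argument must be made explicit; as written, the word ``normal'' smuggles in the conclusion.
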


\begin{proof}  $(2) \Rightarrow (1)$. This implication follows from the fact that a Kaplansky module is a Veksler module.

$(1) \Rightarrow (3)$.  Note that when $X$ is represented as a Banach lattice with a quasi-interior point $x_0$, it is $\sigma$-Dedekind complete and $m(C(K)) = Z(X)$ ( see Lemma~\ref{l7} and its proof). Therefore $m(C(K))$ is weak operator closed. Next, since $X$ does not contain any copy of $\ell^\infty$, $X$ has order continuous norm by Theorem~\ref{t1}.

 $(3) \Rightarrow (2)$. It is well known that $X$ has order continuous norm implies that $X$ is Dedekind complete and by Theorem~\ref{t1} it does not contain a copy of $\ell^\infty$.

  $(3) \Rightarrow (4)$. Since $m(C(K))$ is weak operator closed, by Lemma~\ref{l2} (3) we have $m(C(K)) = Z(X)$. As we have already noticed, $X$ is Dedekind complete. Therefore $Z(X)$ is Dedekind complete and $K$ is Stonian. Moreover, if $\{e_\alpha\}$ is an increasing net in $\mathcal{B}$ and $e = \sup{e_\alpha}$ then for any non-negative $x \in X$ we have $\sup{e_\alpha x} = ex$. Therefore, $\|e_\alpha x - ex\| \rightarrow 0$. It follows that $\mathcal{B}$ is a Bade complete Boolean algebra of projections on $X$.

   $(5) \Rightarrow (3)$. The fact that $K$ is hyperstonian implies that $C(K)$ is a dual Banach lattice. Let us denote its predual by $C(K)_\star$. The definition of order in $C(K)$ by means of its predual implies that whenever $0 \leq a_\alpha$ is an increasing net and $\sup{a_\alpha} = a$ in $C(K)$ then $a_\alpha \rightarrow a$ in the $w^\star$ topology in $C(K)$. Since $m$ is ($w^\star$, weak-operator)-continuous , we have $a_\alpha x \rightarrow ax $ in the weak topology in $X$. In particular if for every $\alpha$, $a_\alpha x = 0$ then $ax = 0$ and therefore $X$ is a Kaplansky module. Hence, by Lemma~\ref{l8}, when represented as a Banach lattice, $X$ is Dedekind complete and $m(C(K)) = Z(X)$. Next assume that $\{x_\alpha\}$ is an increasing net bounded above with non-negative elements in $X$ and that $x = \sup{x_\alpha}$. Then there is an increasing net $\{a_\alpha\}$ of non-negative elements in $C(K)$ such that $x_\alpha = a_\alpha x$. It is immediate to see that $\sup{a_\alpha} = e_x$, the carrier projection of $x$. Then the net $\{a_\alpha\}$ converges to $e_x$ in the $w^\star$ topology in $C(K)$ and therefore, $\{x_\alpha\}$ converges to $x$ in the weak topology in $X$. Duality implies that there is a sequence $\{y_n\}$ in $X$ such that every $y_n$ is a convex combination of elements from the net $\{x_\alpha\}$ and $\|y_n - x \| \rightarrow 0$. Since $\{x_\alpha\}$ is an increasing net of non-negative elements, it follows that the net $\{x_\alpha\}$ converges to $x$ in norm, hence the norm on $X$ is order continuous.

   $(4) \Rightarrow (5)$. Suppose $x \in X$, $x^\prime \in X^\prime$ and define $\mu_{x,x^\prime} \in C(K)^\prime$ as
   $$ \mu_{x,x^\prime}(a) = x^\prime (ax), \; a \in C(K). $$
We want to show that each such functional is order continuous on $C(K)$. Because the set of these functionals is clearly total on $C(K)$, it would follow that $K$ is hyperstonian. It is sufficient to show that for any closed nowhere dense subset $D$ of $K$ we have $\mu_{x,x^\prime}(D) = 0$ where we identify $\mu_{x,x^\prime}$ with the corresponding finite regular Borel measure on $X$. Initially assume that $\mu_{x,x^\prime} \geq 0$. Let us fix $D$ and let us consider the collection $\{\tau \}$ of clopen subsets of $K$ disjoint from $D$ and ordered by inclusion. Let $e_\tau \in \mathcal{B}$ be the characteristic function of the subset $\tau$. Then $\{e_\tau\}$ is an increasing net in $\mathcal{B}$. Because the union of all the sets in the collection $\{\tau\}$ is $K \setminus D$, an open dense subset of $K$, we have $\sup{e_\tau} = 1$. Recalling that $\mathcal{B}$ is Bade complete we see that $\|(1 - e_\tau)x\| \rightarrow 0$. From the inequalities
$$ 0 \leq \mu_{x,x^\prime}(D) \leq \mu_{x,x^\prime}(K \setminus \tau) = \mu_{x,x^\prime}(1 - e_\tau) = x^\prime ((1 - e_\tau)x) $$
we conclude that $\mu_{x,x^\prime}(D) = 0$.

Now denote by $X_R$ the Banach space $X$ considered as a real Banach space and by $X_R^\prime$ the space of all real valued bounded linear functionals on $X_R$. We claim that for any $x^\prime \in X_R^\prime$ and for any $x \in X$ there exists $e_+ \in \mathcal{B}$ such that $\mu_{e_+x,x^\prime} = \mu_{x,x^\prime}^+$ and $-\mu_{(1-e_+)x,x^\prime} = \mu_{x,x^\prime}^-$. Given $\mu_{x,x^\prime}$ let us, without loss of generality, assume that both $\mu_{x,x^\prime}^+$ and $\mu_{x,x^\prime}^-$ are not equal to zero. We can assume, without loss of generality, that for some $e \in \mathcal{B}$ we have $\mu_{x,x^\prime}(e) > 0$ and that the set $\mathcal{N} = \{e^\prime \in e\mathcal{B}: \; \mu_{x,x^\prime}(e^\prime) < 0\}$ is not empty. By Zorn's lemma there is a maximal collection $\Omega$ of pairwise disjoint elements of $\mathcal{N}$. Let $\{e_\alpha\}$ be an increasing net in $\mathcal{B}$ formed by finite sums of elements of $\Omega$. Let $e^\prime \in \mathcal{B}$ be the supremum of $\{e_\alpha\}$. Since $\mathcal{B}$ is Bade complete on $X$, we have $\|(e^\prime - e_\alpha)x\| \rightarrow 0$. Also, since for every $\alpha$, $\mu_{x,x^\prime}(e_\alpha) < 0$ and the net $\{\mu_{x,x^\prime}(e_\alpha)\}$ is decreasing, we have
$$ \mu_{x,x^\prime}(e^\prime) = \lim{\mu_{x,x^\prime}(e_\alpha)} < 0. $$
Then for any projection $p \in (e - e^\prime)\mathcal{B}$ we have $\mu_{x,x^\prime}(p) \geq 0$. Indeed, otherwise we arrive at a contradiction with the maximality of $\Omega$. Thus we have proved that the set
$$ \mathcal{P} = \{e \in \mathcal{B} :  e^\prime \in e\mathcal{B} \Rightarrow \mu_{x,x^\prime}(e^\prime) \geq 0 \} $$
is not empty. Applying Zorn's lemma again, we can find a maximal collection $\mathcal{E}$ of pairwise disjoint elements in $\mathcal{P}$. Let $e_+ = \sup \limits_{e \in \mathcal{E}} e \in \mathcal{B}$. We claim that $e_+ \in \mathcal{P}$. Indeed, let $e^\prime \in \mathcal{B}$. Then for any $e \in \mathcal{E}$ we have $\mu_{x,x^\prime}(e^\prime e) \geq 0$. Bade completeness of $\mathcal{B}$ on $X$ implies that $\mu_{x,x^\prime}(e^\prime e_+) \geq 0$, hence $e_+ \in \mathcal{P}$. It is easy to conclude from the maximality of $\mathcal{E}$ that for any idempotent $e \in (1 - e_+)\mathcal{B}$, we have $\mu_{x,x^\prime}(e) \leq 0$. It follows that both $\mu_{e_+x,x^\prime}$ and $-\mu_{(1-e_+)x,x^\prime}$ are positive linear functionals on $C_R(K)$ (the space of all real-valued continuous functions on $K$). Hence, as we have already proved, both of them, and therefore their difference $\mu_{x,x^\prime}$ are order continuous on $C_R(K)$. Since the map $m$ is one-to-one, it is clear the set of functionals $\{\mu_{x,x^\prime} : x \in X_R, x^\prime \in X_R^\prime\}$ is total on $C_R(K)$. Therefore $K$ is hyperstonian and $C(K)$ is a dual Banach lattice. Since each linear functional in $X^\prime$ is a linear combination over $\mathds{C}$  of functionals from $X_R^\prime$ every functional $\mu_{x,x^\prime}, x \in X, x^\prime \in X^\prime$ is order continuous on $C(K)$ and therefore belongs to its predual $C(K)_\star$. Therefore if $a_\alpha \rightarrow a$ in $w^\star$-topology in $C(K)$ then $m(a_\alpha) \rightarrow m(a)$ in the weak operator topology in $L(X)$.
\end{proof}

We are now ready to state and prove our main result for general Banach $C(K)$-modules. In connection with this we would like to make the following remark.

\begin{remark} \label{r6} (a) In condition (3) of Theorem~\ref{t2} we cannot dispense with the requirement that $m(C(K))$ is weak-operator closed. Indeed, $c_0$ is a cyclic Banach $c$-module and has order continuous norm. But $Z(c_0) = w-cl(m(c_0)) = \ell^\infty \neq m(c_0)$.

(b) As the reader will see, in our next result, Theorem~\ref{t3}, where we consider general Banach $C(K)$-modules, we assume from the very beginning that $m(C(K))$ is weak-operator closed. The following example illustrates that we have to be careful with  regard to this condition. We consider $c$ as a Banach $c$-module; then $m(c) = Z(c)$ is closed in the weak operator topology in $L(c)$ but when we restrict $m$ to the cyclic subspace $c_0$ as we have just seen $m(c)$ is not weak operator closed in $L(c_0)$.

\end{remark}

\begin{theorem} \label{t3} Let $m : C(K) \rightarrow L(X)$ be an injective bounded unital algebra homomorphism such that $m(C(K))$ is weak-operator closed. The following conditions are equivalent.
\begin{enumerate}
  \item
  \begin{enumerate}[(a)]
  \item $K$ is totally disconnected and
    \item $X$ is a Veksler module, and
    \item no cyclic subspace of $X$ contains a copy of $\ell^\infty$.
  \end{enumerate}
  \item
  \begin{enumerate}[(a)]
  \item $K$ is Stonian and
  \item $X$ is a Kaplansky module,
   and
    \item no cyclic subspace of $X$ contains a copy of $\ell^\infty$.
    \end{enumerate}
  \item Each cyclic subspace of $X$ when represented as a Banach lattice has order continuous norm.
  \item
  \begin{enumerate}[(a)]
    \item $K$ is Stonian and
    \item the set $\mathcal{B}$ of all idempotents in $C(K)$ is a Bade complete Boolean algebra of projections on $X$.
  \end{enumerate}
  \item
  \begin{enumerate}[(a)]
    \item $K$ is hyperstonian and
    \item the map $m$ is ($w^\star$, weak operator) continuous.
  \end{enumerate}
\end{enumerate}
 \end{theorem}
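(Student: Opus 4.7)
The strategy is to reduce everything to Theorem~\ref{t2} by restricting to cyclic subspaces $X(x)=\overline{m(C(K))x}$. Whenever $X$ is a Veksler (or Kaplansky) module, each $x\in X\setminus\{0\}$ carries a projection $e_x\in\mathcal{B}$ with clopen support $K_x\subseteq K$, and by Lemma~\ref{l5} the restricted action identifies $X(x)$ as a cyclic Banach $C(K_x)$-module with $K_x$ totally disconnected (or Stonian, resp.). The hypothesis that $m(C(K))$ is weak-operator closed in $L(X)$ is the crucial extra ingredient used to glue the cyclic conclusions back together.

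The easy implications are $(2)\Rightarrow(1)$ (Kaplansky is Veksler, Stonian is totally disconnected) and $(1)\Rightarrow(3)$: for each $x$, the cyclic subspace $X(x)$ satisfies the hypotheses of Theorem~\ref{t2}(1) over $C(K_x)$, hence by $(1)\Rightarrow(3)$ there it has order continuous norm. The implication $(3)\Rightarrow(2)(c)$ is analogously immediate from Theorem~\ref{t2}, as is the Stonian property of each $K_x$.

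The main obstacle is promoting the local Stonian / Kaplansky / Bade-completeness data on each $X(x)$ to the global statements about $K$ and $\mathcal{B}$. For $(3)\Rightarrow(2)(a)$, given a bounded increasing net $\{a_\alpha\}\subseteq C(K)_+$, each net $\{m(a_\alpha)x\}$ sits in the order continuous Banach lattice $X(x)$ and is increasing and bounded, hence norm-converges to some $T(x)\in X(x)$. This $T$ is a bounded linear operator on $X$ and $m(a_\alpha)\to T$ strongly, hence weak-operator; since $m(C(K))$ is weak-operator closed we get $T=m(b)$ for some $b\in C(K)$. To identify $b=\sup a_\alpha$ in $C(K)$, I would use that $\bigcup_x K_x$ is dense in $K$ (an easy consequence of the injectivity of $m$ combined with Lemma~\ref{l4}): on each $K_x$ the sup of $a_\alpha|_{K_x}$ coincides with $b|_{K_x}$ because of order continuity of $X(x)$, and density plus continuity propagate the inequality to $K$. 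The Kaplansky condition $(2)(b)$ is then straightforward, using that the band projection onto $K_x$ (multiplication by $e_x$) commutes with suprema in the now Dedekind complete $C(K)$. The Bade completeness in $(4)(b)$ is proven by the same norm-convergence argument applied to an increasing net in $\mathcal{B}$, invoking Bade completeness of $e_x\mathcal{B}$ on each $X(x)$ provided by Theorem~\ref{t2}$(3)\Rightarrow(4)$. Thus $(3)\Rightarrow(2)$ and $(3)\Rightarrow(4)$ both follow.

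For $(4)\Rightarrow(5)$ I would transcribe the measure-theoretic argument from Theorem~\ref{t2}$(4)\Rightarrow(5)$ essentially verbatim: the Hahn-type decomposition of the functionals $\mu_{x,x^\prime}(a)=x^\prime(m(a)x)$ is constructed by two applications of Zorn's Lemma, relying only on Bade completeness of $\mathcal{B}$ and on the density of $\{\mu_{x,x^\prime}\}$ in $C(K)^\prime$ (guaranteed by injectivity of $m$); cyclicity of $X$ was never used there. Finally $(5)\Rightarrow(3)$ follows by restricting the $(w^\star,\text{weak-operator})$-continuous homomorphism to each cyclic subspace $X(x)$ and applying Theorem~\ref{t2}$(5)\Rightarrow(3)$ locally, which gives order continuous norm on every $X(x)$ and closes the cycle. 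The only genuinely new work is the gluing argument in the previous paragraph; the remaining steps are either immediate or essentially a re-run of the cyclic proof.
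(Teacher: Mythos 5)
Your overall architecture is sound but genuinely different from the paper's: you prove $(3)\Rightarrow(2)$ and $(3)\Rightarrow(4)$ directly by a strong-convergence gluing argument that exploits weak-operator closedness of $m(C(K))$, then get $(4)\Rightarrow(5)$ by rerunning the Zorn/Hahn-decomposition argument of Theorem~\ref{t2} (you are right that cyclicity is never used there), and $(5)\Rightarrow(3)$ by localizing to cyclic subspaces. The paper instead proves $(3)\Rightarrow(5)$: it shows that (3) forces weakly compact action of $m(C(K))$ on $X$ (by applying Theorem~\ref{t2} $(3)\Rightarrow(5)$ to $Z(X(x))$ rather than to $C(K_x)$), then uses Lemma~\ref{l3} to extend $m$ to a ($w^\star$, weak-operator)-continuous $\hat m$ on $C(K)^{\prime\prime}$, factors through a hyperstonian $C(S)$, proves $\hat m(C(S))$ is weak-operator closed by a gluing based on~\cite[Lemma 2]{HO2}, and finally uses the standing hypothesis to conclude $m(C(K))=\hat m(C(S))$ and $K\cong S$; it then closes the cycle via $(5)\Rightarrow(4)\Rightarrow(3)$. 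Your route avoids the bidual extension entirely, which is a real economy, provided the gluing step is done correctly.

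Two steps are flawed as written. First, your identification of $b=\sup a_\alpha$ under hypothesis (3) invokes carrier projections $e_x$, their supports $K_x$, Lemma~\ref{l4}, and density of $\bigcup_x K_x$; none of these are available under (3), which says nothing about $K$ being totally disconnected or $X$ being a Veksler module, so the argument is circular there. The gap is repairable without the $K_x$'s: for each fixed $\alpha$, $m(b-a_\alpha)x$ is a norm limit of elements $m(a_\beta-a_\alpha)x$ ($\beta\ge\alpha$) of the closed positive cone of $X(x)$, hence positive; since the canonical map $C(K)\to Z(X(x))$ is a unital algebra (hence lattice) homomorphism and, for the quasi-interior point $x$, one has $(\pi x)^-=\pi^- x$ for $\pi\in Z(X(x))$, the relation $m(c)x\ge 0$ for all $x\in X$ forces $m(c^-)=0$ and so $c\ge 0$ by injectivity of $m$; applied to $c=b-a_\alpha$ and to $c^{\prime}-b$ for any upper bound $c^{\prime}$, this gives $b=\sup a_\alpha$ in $C(K)$. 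Second, you invoke Theorem~\ref{t2} $(3)\Rightarrow(4)$ on the cyclic subspaces $X(x)$; condition (3) of Theorem~\ref{t2} requires the algebra to be weak-operator closed in $L(X(x))$, and this does not follow from weak-operator closedness of $m(C(K))$ in $L(X)$ --- this is precisely the pitfall of Remark~\ref{r6}(b). Fortunately that invocation is unnecessary: your own gluing, applied to an increasing net in $\mathcal{B}$, yields $\|m(e)x-m(e_\alpha)x\|\to 0$ once $b=\sup e_\alpha$ is identified as above (note that the supremum in $C(K)$ of an increasing net of idempotents is again an idempotent, so it coincides with the supremum in $\mathcal{B}$), and the Kaplansky property follows by the same limit argument applied to a net annihilating $x$. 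With these repairs your cycle $(2)\Rightarrow(1)\Rightarrow(3)\Rightarrow(2)$, $(3)\Rightarrow(4)\Rightarrow(5)\Rightarrow(3)$ closes and gives a valid proof.
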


 \begin{proof} $(2) \Rightarrow (1)$. This implication is trivial.

 $(1) \Rightarrow (3)$. It follows from (1) and Lemma~\ref{l7} that for any $x \in X$ the cyclic subspace $X(x)$ when represented as a Banach lattice is $\sigma$-Dedekind complete and $Z(X(x)) = C(K_x)$ where $K_x$ is the support of $e_x$ in $K$. Because $X(x)$  does not contain a copy of $\ell^\infty$ by Theorem~\ref{t1} ( $(2) \Rightarrow (1))$, the implication is proved.

 $(3) \Rightarrow (5)$. First we claim that (3) implies that $m(C(K))$ has weakly compact action on $X$. Let $x \in X$. Then $X(x)$ has order continuous norm when represented as a Banach lattice. Notice that for every $a \in C(K)$ the cyclic space $X(x)$ is invariant for the operator $m(a)$. Then the correspondence $a \rightarrow m(a)|X(x)$ defines the algebraic homomorphism $m_x : C(K) \rightarrow L(X(x))$. Let $C(K_x)\cong C(K)/\ker{m_x}$ and let $\dot{m}_x : C(K_x) \rightarrow L(X(x))$ be the unital injective algebra homomorphism induced by $m_x$. By Lemma~\ref{l2} the closure of $\dot{m}_x(C(K_x))$ in the weak operator topology in $L(X(x))$ can be identified with $Z(X(x))$. Let $L$ be the Stone representation space of $Z(X(x))$. Clearly $C(K_x)$ can be identified with a closed subalgebra of $C(L)$, the map $\dot{m}_x$ can be extended to an algebraic unital injective homomorphism $\tilde{m}_x : C(L) \rightarrow L(X(x))$, and $\tilde{m}_x(C(L)) = Z(X)$ hence $\tilde{m}_x(C(L))$ is weak operator closed in $L(X(x))$.  Applying the implication $(3) \Rightarrow (5)$ from Theorem~\ref{t2} we obtain that the Stone representation space $L$ of $Z(X(x))$ is hyperstonian and the embedding $\tilde{m}_x : C(L) \rightarrow L(X(x))$ is ($w^\star$, weak-operator)-continuous. Because $\tilde{m}_x(a)x = m(a)x, \; \textrm{when} \;  a \in C(K)$, we have that for any $x \in X$ the map $a \rightarrow m(a)x$ is weakly compact, and thus the claim is proved.

 Then, by Lemma~\ref{l3} there is a unique extension of $m$, $\hat{m} : C(K)^{\prime \prime} \rightarrow L(X)$ that is ($w^\star$, weak-operator)-continuous. Since $C(K)^{\prime \prime}$ is a dual $AM$-space its Stone representation space is hyperstonian. Because $\ker{\hat{m}}$ is a $w^\star$-closed ideal in $C(K)^{\prime \prime}$ we have that $C(K)^{\prime \prime}/\ker{\hat{m}} = C(S)$ where $S$ is a hyperstonian compact space. This means that as well as being an isometric unital algebra homomorphism the map $\hat{m} : C(S) \rightarrow L(X)$ is ($w^\star$, weak-operator) continuous. Therefore $X$ is a Kaplansky $C(S)$-module. We claim that this implies that $\hat{m}(C(S))$ is weak-operator closed in $L(X)$. For each $x \in X$, let $e_x$ denote the carrier projection of $x$ and $S_x$ the clopen subset of $S$ that is the support of $e_x$. Note that $C(S_x) = e_xC(S) = C(S)/(\ker{\hat{m}|X(x)})$.  Since  $\ker{\hat{m}|X(x)}$ is $w^\star$-closed, $C(S_x)$ is a dual Banach space and $S_x$ is hyperstonian. By Lemma~\ref{l8}, $X(x)$ when represented as a Banach lattice is Dedekind complete and $Z(X(x)) = C(S_x)$ is weak-operator closed in $L(X(x))$. Consider $T \in L(X)$ which is in the weak-operator closure of $\hat{m}(C(S))$. Then for any $x \in X$ the cyclic subspace $X(x)$ is $T$-invariant and $T|X(x) \in Z(X(x))$. Therefore for each $x \in X$ there is $a_x \in C(S_x) = e_xC(S)$ such that $T(x) = \hat{m}(a_x)x$. Then as proved (by a standard argument) in~\cite[Lemma 2]{HO2} there is $a \in C(S)$ such that $e_x a = a_x, x \in X$ and therefore $T(x) = \hat{m}(a)x, x \in X$. Thus $T \in \hat{m}(C(S))$ and as claimed $\hat{m}(C(S))$ is weak-operator closed. But $m(C(K))$ is weak-operator closed, $C(K)$ is $w^\star$-dense in C(S), and $\hat{m}$ is ($w^\star$, weak operator)- continuous. Hence $m(C(K))=\hat{m}(C(S))$ and $K$ is homeomorphic to $S$.

  $(5) \Rightarrow (4)$. Suppose that $\{e_\alpha\}$ is an increasing net in $\mathcal{B}$ such that $\sup{e_\alpha} = e \in \mathcal{B}$. Then $e_\alpha \rightarrow e$ in the $w^\star$ topology in $C(K)$. Let $x \in X$. Because $m$ is ($w^\star$, weak-operator)-continuous we have $e_\alpha x \rightarrow ex$ in the weak topology in $X$.
Duality implies that for a sequence $\{a_n\}$ of convex combinations of $e_\alpha$ we have
$\|ex - a_n x\| \rightarrow 0$. But the net $\{e_\alpha\}$ is increasing and therefore for a fixed $n$ and for all sufficiently large $\alpha$ we have $\|ex - e_\alpha x\| \leq \|ex - a_nx\|$. Hence $\|ex -  e_\alpha x \| \rightarrow 0 $ and $\mathcal{B}$ is a Bade complete Boolean algebra of projections on $X$.

$(4) \Rightarrow (3)$. It follows immediately from (4) that $X$ is a Kaplansky Banach $C(K)$-module. It remains to notice that for each $x \in X$ the Boolean algebra $e_x\mathcal{B}$ is Bade complete on $X(x)$ and apply the implication $(4) \Rightarrow (3)$ from Theorem~\ref{t2}.

$(4) \Rightarrow (2)$ We have already proved that (4) implies that $X$ is a Kaplansky module and that each cyclic subspace $X(x)$ when represented as a Banach lattice has order continuous norm. By Theorem~\ref{t1}, $X(x)$ does not contain any copy of $\ell^\infty$.
 \end{proof}

 Our next theorem relates to the case of finitely generated Banach $C(K)$-modules. It would be just a corollary of Theorem~\ref{t3} except for the fact that it contains two new equivalences that are not true in the general case.

 Recall (see~\cite{KO1}) that a Banach $C(K)$ module is called \textbf{finitely generated} if there are $x_1, \ldots , x_n \in X$ such that the linear span of cyclic subspaces $X(x_i), i= 1, \ldots , n$ is dense in $X$. The elements $x_1, \ldots , x_n$ are called \textbf{generators} of $X$.

 \begin{theorem} \label{t4} Let $X$ be a finitely generated Banach $C(K)$-module such that  $m$ is injective and $m(C(K))$ is weak operator closed in $L(X)$. Then the conditions (1) - (5) of Theorem~\ref{t3} are equivalent to the following two conditions.

 $(1A)$ $K$ is totally disconnected, $X$ is a Veksler module, and $X$ does not contain a copy of $\ell^\infty$.

 $(2A)$ $K$ is Stonian, $X$ is a Kaplansky module, and $X$ does not contain a copy of $\ell^\infty$.

 \end{theorem}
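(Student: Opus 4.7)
Since Theorem~\ref{t3} already establishes the mutual equivalence of $(1)$--$(5)$, the plan is to splice $(1A)$ and $(2A)$ into this cycle via two elementary implications and one substantive one. The elementary parts are $(2A)\Rightarrow(1A)$---a Stonian space is totally disconnected and a Kaplansky module is by definition a Veksler module---and $(1A)\Rightarrow(1)$ of Theorem~\ref{t3}, which is immediate because every cyclic subspace $X(x)$ is a closed subspace of $X$, so the absence of $\ell^\infty$ in $X$ forces its absence in each $X(x)$.

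The substantive step is to show that any one of the conditions of Theorem~\ref{t3} forces $X$ itself, not merely each of its cyclic subspaces, to avoid $\ell^\infty$. I would start from condition $(3)$ of Theorem~\ref{t3}: each cyclic subspace $X(x)$, represented as a Banach lattice via Lemma~\ref{l2}, carries an order continuous norm and is therefore weakly sequentially complete. In particular this applies to the cyclic subspaces $X(x_1),\ldots,X(x_n)$ generated by fixed generators $x_1,\ldots,x_n$ of $X$.

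To transfer weak sequential completeness from these cyclic subspaces to all of $X$, I would invoke the finitely generated analogue of the Banach lattice criterion established by the authors in~\cite{KO1}: a finitely generated Banach $C(K)$-module is weakly sequentially complete if and only if each of its cyclic subspaces is. Applying this to $X$ with generators $x_1,\ldots,x_n$, I conclude that $X$ itself is weakly sequentially complete, hence contains no isomorphic copy of $c_0$, and a fortiori no copy of $\ell^\infty$. Combined with the Kaplansky and Stonian conclusions furnished by condition $(2)$ of Theorem~\ref{t3}, this produces $(2A)$ and, via the trivial implication already noted, also $(1A)$.

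The main obstacle is the localization step just described: in general, cyclic subspaces may all avoid $\ell^\infty$ while $X$ contains it---this is precisely why Theorem~\ref{t3} is phrased in cyclic-subspace terms and why the example mentioned in the introduction distinguishes the two settings. Finite generation, together with the result of~\cite{KO1}, is exactly what allows the passage from cyclic subspaces to $X$, and this localization is what makes Theorem~\ref{t4} a genuinely new statement beyond Theorem~\ref{t3}.
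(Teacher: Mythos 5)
There is a genuine gap in your substantive step. You claim that each cyclic subspace $X(x)$, having order continuous norm, is ``therefore weakly sequentially complete.'' This implication is false: $c_0$ has order continuous norm but is not weakly sequentially complete. Order continuity (for $\sigma$-Dedekind complete lattices) corresponds, via Lozanovsky's theorem, to the absence of $\ell^\infty$, whereas weak sequential completeness of a Banach lattice corresponds to the absence of $c_0$; the hypotheses of Theorem~\ref{t3} only exclude $\ell^\infty$ from the cyclic subspaces and give no control over copies of $c_0$. Indeed, $c_0$ viewed as a cyclic module over its center $\ell^\infty$ satisfies all conditions of Theorem~\ref{t3}, so your route would ``prove'' that $c_0$ is weakly sequentially complete, which it is not. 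Consequently the appeal to the weak-sequential-completeness criterion of the authors (which, incidentally, is the subject of~\cite{KO2}, not~\cite{KO1}) cannot close the argument: even granting that criterion, you would need each cyclic subspace to omit $c_0$, and that is not available. So the localization step --- the only nontrivial content of the theorem --- is not established by your argument.

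The paper's actual mechanism is different and is worth noting: it proceeds by induction on the number of generators. Given generators $x_0,x_1,\ldots,x_n$, one forms the submodule $Y$ generated by $x_1,\ldots,x_n$ (which inherits conditions (1)--(5) and by induction omits $\ell^\infty$) and the quotient $X/Y$, which is cyclic over a quotient $C(K_0)$ of $C(K)$ with $w^\star$-closed kernel; Theorem~\ref{t2} then shows $X/Y$ has order continuous norm as a Banach lattice, hence omits $\ell^\infty$ by Theorem~\ref{t1}. The conclusion for $X$ follows because not containing $\ell^\infty$ is a three-space property~\cite{Ca}. If you want to keep the spirit of your plan, the fix is to replace the detour through weak sequential completeness by exactly this submodule/quotient induction together with the three-space property for $\ell^\infty$.
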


 \begin{proof} Equivalence of (1) - (5) follows from Theorem~\ref{t3}. It is obvious that
 $(2A) \Rightarrow (1A) \Rightarrow (1)$. Therefore the theorem will be proved if show that (2A) follows from conditions (1) - (5).

 Hence we can assume that $K$ is hyperstonian, $m : C(K) \rightarrow L(X)$ is ($w^\star$, weak-operator)-continuous, and the Boolean algebra $\mathcal{B}$ of all idempotents in $C(K)$ is Bade complete on $X$. Furthermore, no cyclic subspace of $X$ contains a copy of $\ell^\infty$. Under this conditions we have to show that $X$ does not contain a copy of $\ell^\infty$. We will prove it by induction on the number of generators of $X$. When $n = 1$, $X$ is cyclic and therefore does not contain a copy of $\ell^\infty$. Suppose that whenever $X$ has $n$ generators the conditions $(1) - (5)$ imply (2A). Let $X$ have a minimum of $n+1$ generators $\{x_0, x_1, \ldots , x_n\}$. Consider the submodule $Y$ of $X$ with the generators $\{x_1, \ldots , x_n\}$. The map $m$ generates in an obvious way the map $m_Y : C(K) \rightarrow L(Y)$. Clearly $\ker{m_Y}$ is $w^\star$-closed and therefore $C(K)/\ker{m_Y} \cong C(K_Y)$ where $K_Y$ is a clopen subset of $K$. Hence $C(K_Y)$ is a dual Banach lattice. Let $e_Y$ be the characteristic function of $K_Y$. Then $e_Y\mathcal{B}$ is Bade complete on $Y$. Therefore $m_Y(C(K_Y))$ is weak-operator closed in $L(Y)$ and $Y$ satisfies conditions (1) - (5). Then by the induction hypothesis no closed subspace of $Y$ is isomorphic to $\ell^\infty$. Next we consider the factor $X/Y$ with elements $[x] = x + Y, x \in X$. Because $Y$ is a $C(K)$-submodule of $X$ the map
 $m_{X/Y} : C(K) \rightarrow L(X/Y)$ is well defined. It is clear that $\ker{m_{X/Y}}$ is a $w^\star$-closed ideal in $C(K)$ and therefore $C(K)/\ker{m_{X/Y}} \cong C(K_0)$ where $K_0$ is a clopen subset of $K$. Moreover $C(K_0)$ is a dual Banach lattice. Notice that $X/Y = X/Y([x_0])$ is a cyclic Banach space and because the map $m_{X/Y} : C(K_0) \rightarrow L(X/Y)$ is ($w^\star$, weak-operator)-continuous we see that the Boolean algebra $e_0 \mathcal{B}$, where $e_0$ is the characteristic function of $K_0$, is Bade complete on $X/Y$. Hence by Theorem~\ref{t2} $X/Y$ has order continuous norm when represented as a Banach lattice. Then by Theorem~\ref{t1} $X/Y$ does not contain any copy of $\ell^\infty$. Since not containing $\ell^\infty$ is a three-space property~\cite{Ca}, $X$ does not contain a copy of $\ell^\infty$, and the proof is complete.

 \end{proof}

 \begin{remark} \label{r7} In Theorem~\ref{t4} we cannot dispense with the condition that $X$ is finitely generated. Indeed, $\ell^\infty$ considered as a $\mathds{C}$-module satisfies conditions (1) - (5) of Theorem~\ref{t3} but it does contain a copy of itself.

 \end{remark}

 Our final result relates to the dual Radon-Nikodym property and is a corollary of Theorem~\ref{t4} and~\cite[Theorem 3.4]{KO3}.

 \begin{theorem} \label{t5} Let $K$ be a totally disconnected compact space and $X$ be a finitely generated Veksler $C(K)$-module. Assume also that $m(C(K))$ is weak-operator closed in $L(X)$. Then the following conditions are equivalent.

 \begin{enumerate}
   \item $X^\prime$ has the Radon-Nikodym property.
   \item $X$ does not contain a copy of $\ell^1$.
   \item No cyclic subspace of $X$ contains a copy of $\ell^1$.
 \end{enumerate}

 \end{theorem}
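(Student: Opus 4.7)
The implications $(1) \Rightarrow (2)$ and $(2) \Rightarrow (3)$ are formal. For the first, a copy of $\ell^1$ in $X$ would force the non-RNP space $\ell^\infty$ to sit as a quotient of $X^\prime$, contradicting (1). The second is trivial because cyclic subspaces are closed in $X$.

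The substance is in $(3) \Rightarrow (1)$, and the plan is to combine \cite[Theorem~3.4]{KO3} with the induction-on-generators template from the proof of Theorem~\ref{t4}. The base case is cyclic: if $X = X(x_0)$, Lemma~\ref{l2} presents $X$ as a Banach lattice with a quasi-interior point, and the classical Banach-lattice criterion---which is precisely what \cite[Theorem~3.4]{KO3} extracts in the cyclic setting---yields that $X^\prime$ has the Radon--Nikodym property as soon as $X$ contains no copy of $\ell^1$. For the inductive step, given generators $x_0, x_1, \ldots, x_n$, I would set $Y$ equal to the closed $C(K)$-submodule generated by $x_1, \ldots, x_n$. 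Copying the argument from the proof of Theorem~\ref{t4}, $Y$ is a finitely generated Veksler $C(K_Y)$-module, where $K_Y$ is a clopen subset of $K$, with $m_Y(C(K_Y))$ weak-operator closed in $L(Y)$; condition (3) is inherited by $Y$ because every cyclic subspace of $Y$ is a cyclic subspace of $X$. By the induction hypothesis, $Y^\prime$ has RNP. The quotient $X/Y$ is cyclic, and one identifies the clopen $K_0 \subseteq K$ coming from $\ker m_{X/Y}$ to present $X/Y$ as a cyclic Veksler $C(K_0)$-module; the cyclic case then gives $(X/Y)^\prime$ with RNP. Since RNP is a three-space property for duals, $X^\prime$ has RNP.

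The most delicate step I anticipate is verifying that the cyclic quotient $X/Y$ has $(X/Y)^\prime$ with RNP, because a cyclic subspace of $X/Y$ need not lift to a cyclic subspace of $X$, so condition (3) does not obviously transfer to $X/Y$. The resolution, parallel to the treatment of $X/Y$ in the proof of Theorem~\ref{t4}, is to work directly through the cyclic Banach lattice representation of $X/Y$ provided by Lemma~\ref{l2} and apply the cyclic form of \cite[Theorem~3.4]{KO3} to obtain the $\ell^1$-avoidance, and hence RNP in the dual.
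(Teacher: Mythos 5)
Your overall strategy deviates from the paper's, and in its present form it has a genuine gap. The paper's proof of $(3)\Rightarrow(1)$ is a short reduction: since $\ell^\infty$ contains a copy of $\ell^1$, condition (3) implies that no cyclic subspace of $X$ contains $\ell^\infty$, so condition (1) of Theorem~\ref{t3} holds; by Theorem~\ref{t4} the Boolean algebra $\mathcal{B}$ is then Bade complete on $X$, and the finitely generated case of \cite[Theorem 3.4]{KO3} ($(3)\Rightarrow(1)$ there) is quoted as a black box. You instead try to re-prove that finitely generated result by induction on generators, using only its cyclic (Banach lattice) form. The first problem is that you never establish the structural facts you borrow from the proof of Theorem~\ref{t4}: that $\ker m_Y$ and $\ker m_{X/Y}$ are $w^\star$-closed, that $K_Y$ and $K_0$ are clopen, and above all that $m_Y(C(K_Y))$ is weak-operator closed in $L(Y)$. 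In Theorem~\ref{t4} these come from conditions (1)--(5) already being in force ($K$ hyperstonian, $m$ ($w^\star$, weak-operator)-continuous, $\mathcal{B}$ Bade complete); your hypotheses yield none of this until you make the paper's key observation that (3) excludes $\ell^\infty$ from cyclic subspaces, so that Theorems~\ref{t3} and~\ref{t4} apply. In particular, weak-operator closedness does not in general survive restriction to a submodule --- Remark~\ref{r6}(b) is precisely a warning about this.

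The decisive gap is the quotient step. To get that $(X/Y)^\prime$ has RNP you need $X/Y$ to contain no copy of $\ell^1$ (equivalently, to be Asplund), but hypothesis (3) controls only cyclic subspaces of $X$, not quotients, and your proposed resolution --- ``apply the cyclic form of \cite[Theorem 3.4]{KO3} to obtain the $\ell^1$-avoidance'' --- is circular: that criterion takes $\ell^1$-avoidance as its hypothesis and returns dual RNP; it cannot produce the avoidance. Unlike the $\ell^\infty$ situation in Theorem~\ref{t4}, where Bade completeness of $e_0\mathcal{B}$ on $X/Y$ gives order continuity via Theorem~\ref{t2} and hence no $\ell^\infty$ by Lozanovsky, there is no analogous implication toward ``no $\ell^1$'': $L^1[0,1]$ is a cyclic module over its center with a Bade complete Boolean algebra and order continuous norm, yet it contains $\ell^1$ and its dual fails RNP. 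So the induction cannot be closed along the proposed lines without essentially re-proving the finitely generated case of \cite[Theorem 3.4]{KO3}, which is exactly what the paper avoids by citing it. Two smaller points: your justification of $(1)\Rightarrow(2)$ via ``$\ell^\infty$ sits as a quotient of $X^\prime$'' is shaky, since RNP does not pass to arbitrary quotients; argue instead via Stegall's theorem ($\ell^1$ is a separable subspace of $X$ with nonseparable dual) or via heredity of the Asplund property. Your appeal to the three-space property for dual RNP (i.e.\ for Asplundness) is, by contrast, legitimate.
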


 \begin{proof} Without loss of generality we may assume that $m$ is injective. The implications $(1) \Rightarrow (2) \Rightarrow (3)$ are trivial. Assume (3). Since $\ell^\infty$ contains a copy of $\ell^1$ no cyclic subspace of $X$ contains a copy of $\ell^\infty$. By Theorem~\ref{t4} the Boolean algebra $\mathcal{B}$ of all idempotents in $C(K)$ is Bade complete. Now Theorem 3.4 ($(3) \Rightarrow (1)$) in~\cite{KO3} implies that $X^\prime$ has the Radon - Nikodym property.

 \end{proof}

\end{document}